\def\@author#1{\g@addto@macro\elsauthors{\normalsize%
\def\baselinestretch{1}%
\upshape\authorsep#1\unskip\textsuperscript{%
\ifx\@fnmark\@empty\else\unskip\sep\@fnmark\let\sep=,\fi
\ifx\@corref\@empty\else\unskip\sep\@corref\let\sep=,\fi}%
\def\authorsep{\unskip,\space}%
\global\let\@fnmark\@empty
\global\let\@corref\@empty  %% Added
\global\let\sep\@empty}%
\@eadauthor={#1}
}
\DeclareMathOperator{\spec}{Spec}
\newtheorem{theorem}{Theorem}[section]
\newtheorem{corollary}[theorem]{Corollary}
\newtheorem{lemma}[theorem]{Lemma}
\theoremstyle{definition}
\newtheorem{definition}{Definition}[section]
\newtheorem{remark}[theorem]{Remark}
\renewcommand\ell{l}
\journal{Journal}
\begin{document}

\begin{frontmatter}

\title{\Large Total graph of a signed graph}

\author[1]{Francesco Belardo}
\ead{fbelardo@unina.it}
\author[2]{Zoran Stani\'c}
\ead{zstanic@matf.bg.ac.rs}
\author[3]{Thomas Zaslavsky}
\ead{zaslav@math.binghamton.edu}

\address[1]{\scriptsize Department of Mathematics and Applications, University of Naples Federico II, I-80126 Naples, Italy}
\address[2]{\scriptsize Faculty of Mathematics, University of Belgrade, Studentski trg 16, 11 000 Belgrade, Serbia}
\address[3]{\scriptsize Department of Mathematical Sciences, Binghamton University, Binghamton, NY 13902-6000, United States}

\begin{abstract}
The total graph is built by joining the graph to its line graph by means of the incidences. We introduce a similar construction for signed graphs. Under two similar definitions of the line signed graph, we define the corresponding total signed graph and we show that it is stable under switching. We consider balance, the frustration index and frustration number, and the largest eigenvalue. In the regular case we compute the spectrum of the adjacency matrix of the total graph and the spectra of certain compositions, and we determine some with exactly two main eigenvalues.
\end{abstract}

\begin{keyword} Bidirected graph; signed line graph; signed total graph; graph eigenvalues; regular signed graph; Cartesian product graph.

\MSC[2020] Primary 05C50; Secondary 05C76, 05C22.
\end{keyword}

\end{frontmatter}

\section{Introduction}

We define the total graph of a signed graph in a way that extends to signed graphs the spectral theory of ordinary total graphs of graphs.
The usual total graph is built by joining the graph to its line graph by means of its vertex-edge incidences; this construction coordinates well with the adjacency matrix. When we consider signed graphs, there is a similar definition which extends the notion of line graph of a signed graph and which coordinates combinatorial and matrix constructions. Working from two similar definitions of the line signed graph we define the corresponding total graphs, and we show they are stable under switching. We examine fundamental properties of the signed total graph including balance, the degree of imbalance as measured by the frustration index and frustration number, and the largest eigenvalue. In the regular case we compute the spectrum of the adjacency matrix and the spectra of certain compositions and determine some with exactly two main eigenvalues.

A \textit{signed graph} $\Sigma$ is a pair $(G, \sigma)=G_{\sigma}$, where $G=(V, E)$ is an ordinary (unsigned) graph, called the \textit{underlying graph}, and $\sigma\colon E\longrightarrow\{-1, +1\}$ is the \textit{sign function} (the signature). The edge set of a signed graph is composed of subsets of positive and negative edges. We interpret an unsigned graph $G$ as the {\em all-positive} signed graph $(G,+)=+G$, whose signature gives $+1$ to all the edges. Similarly, by $(G,-)=-G$ we denote a graph $G$ with the {\em all-negative signature}. In general, we have $-\Sigma=(G,-\sigma)$.

Many familiar notions about unsigned graphs extend directly to signed graphs. For example, the degree $d(v)$ of a vertex $v$ in $G_{\sigma}$ is simply its degree in $G$. On the other hand, there also are notions exclusive to signed graphs, most importantly the sign of a cycle, namely the product of its edge signs.
A signed graph or its subgraph is called {\it balanced} if every cycle in it, if any, is positive.
Oppositely, $\Sigma$ is \emph{antibalanced} if $-\Sigma$ is balanced, i.e., every odd (resp.,~even) cycle of $\Sigma$ is negative (resp.,~positive), e.g., if $\Sigma=-G$.
Balance is a fundamental concept of signed graphs and measuring how far a signed graph deviates from it is valuable information.  The \textit{frustration index} $\ell$ (resp.,~the \textit{frustration number} $\nu$) of a signed graph is the minimum number of edges (resp.,~vertices) whose removal results in a balanced signed graph.  These numbers generalize the edge biparticity and vertex biparticity of a graph $G$, which equal $\ell(-G)$ and $\nu(-G)$, respectively.

Also important is the operation of switching.  If $U$ is a set of vertices of $\Sigma$, the switched signed graph $\Sigma^U$ is obtained from $\Sigma$ by reversing the signs of the edges in the cut $[U,\Sigma\setminus U]$. The signed graphs $\Sigma$ and $\Sigma^U$ are said to be {\it switching equivalent}, written $\Sigma \sim \Sigma^U$,  and the same is said for their signatures.
Notably, a signed graph is balanced if and only if it is switching equivalent to the all-positive signature \cite{zas2} and it is antibalanced if and only if it switches to the all-negative signature. For basic notions and notation on signed graphs not given here we refer the reader to \cite{zas2, Zas}.

The \textit{adjacency matrix} $A_{\Sigma}$ of $\Sigma=G_\sigma$ is obtained from the standard $(0,1)$-adjacency matrix of $G$ by reversing the sign of all $1$'s which correspond to negative edges.  The \textit{eigenvalues} of $\Sigma$ are identified to be the eigenvalues of~$A_{\Sigma}$; they form the \textit{spectrum} of $\Sigma$. The \textit{Laplacian matrix} of~$\Sigma$ is defined by $L_{\Sigma}=D_G-A_{\Sigma}$, where $D_G$ is the diagonal matrix of vertex degrees of $G$. Analogously, the \textit{Laplacian eigenvalues} of $\Sigma$ are the eigenvalues of $L_{\Sigma}$.

It is well known that the signed graphs $\Sigma=G_{\sigma}$ and $\Sigma'=G_{\sigma'}$ are switching equivalent if and only if there exists a diagonal matrix $S$ of $\pm1$'s, called the {\em switching matrix}, such that $A_{\Sigma'}=S^{-1}A_{\Sigma}S$, and we say that the corresponding matrices are \textit{switching similar}. More generally, the signed graphs $\Sigma$ and $\Sigma'$ are \textit{switching isomorphic} if there exist a permutation matrix $P$ and a switching matrix $S$ such that $A_{\Sigma'}=(PS)^{-1}A_{\Sigma}(PS)$; in fact, $PS$ can be seen as a signed permutation matrix (or a $\{1, 0, -1\}$-monomial matrix).

The frustration index and the frustration number are among the most investigated invariants -- for more details one can consult~\cite{Zas}. Similarly, the largest eigenvalue of the adjacency matrix is  the most investigated spectral invariant of graphs.
Evidently, switching preserves the eigenvalues of  $A_{\Sigma}$ and $L_{\Sigma}$, and it also preserves the signs of cycles, so that switching equivalent signed graphs share the same set of positive (and negative) cycles and have the same frustration index and frustration number. For the above reasons, when we consider a signed graph $\Sigma$ perspective, we are considering its switching isomorphism class $[\Sigma]$, and we focus our attention to the properties of $\Sigma$ which are invariant under switching isomorphism.

Here is the remainder of the paper. In Section~2 we discuss the concept of line graph of a signed graph. The total graph is presented in Section~\ref{sec:Gen}. In Section~\ref{sec:Reg} we consider regular underlying graphs and, similarly to what is known for unsigned graphs \cite{CvDS}, we give the eigenvalues of a total graph of a signed graph by means of the eigenvalues of its root signed graph.

%A note on terminology:
We stress that a line (total) graph of a signed graph is always signed, so ``line (total) graph'' of $\Sigma$ means the same as ``signed line (total) graph'' of $\Sigma$.
For brevity we also call these graphs ``line (total) signed graphs'' and ``signed line (total) graphs'' (although literally the latter can mean any signature on an unsigned line or total graph; indeed an entirely different signed total graph has been defined by Sinha and Garg~\cite{sinha}).

\section{Line Graph(s)}

The line graph is a well-known concept in graph theory: given a graph $G=(V(G),E(G))$, the line graph $\mathcal{L}(G)$ has $E(G)$ as vertex set, and two vertices of $\mathcal{L}(G)$ are adjacent if and only if the corresponding edges are adjacent in $G$. If we consider signed graphs $\Sigma=G_{\sigma}$, then a (signed) line graph $\mathcal{L}(G_{\sigma})$ should have $\mathcal{L}(G)$ as its underlying graph. However, what signature should we associate to it? The answer to this question is a matter of discussion because it is possible to have several very different signatures. In this section we shall consider the two relevant ones defined in the literature.

\subsection{Definitions of a line graph}

Zaslavsky gave the first definition of incidence matrix of signed graphs \cite{zas2}, which is a necessary step in a spectrally consistent definition of a line graph.
For a signed graph $\Sigma=G_{\sigma}$, we introduce the vertex-edge {\em orientation} $\eta\colon V(G)\times E(G) \longrightarrow \{1, 0, -1\}$ formed by obeying the following rules:
\begin{enumerate}[(O1)]
\item $\eta(i,jk)=0$ if $i\notin\{j, k\}$;
\item $\eta(i, ij)=1$ or $\eta(i, ij)=-1$;
\item $\eta(i, ij)\eta(j, ij)=-\sigma(ij)$.
\end{enumerate}
(The minus sign in (O3) is necessary for several purposes, such as with signed-graph orientations \cite{ZasC, ZasO} and geometry \cite{ZasO}.)
The {\em incidence matrix} $B_\eta=(\eta_{ij})$ is a vertex-edge incidence matrix derived from $G_\sigma$, such that its $(i,e)$-entry is equal to $\eta(i,e)$. However, it is not uniquely determined by $\Sigma$ alone. As in the definition of the oriented incidence matrix for unsigned graphs, one can randomly choose an entry $\eta(i, ij)$ to be either $+1$ or $-1$, but the entry $\eta(j, ij)$ is then determined by $\sigma(ij)$, so $\eta$ is called an {\em orientation of $G_\sigma$} (and a {\em biorientation of $G$}, the unsigned underlying graph). Zaslavsky later interpreted $B_\eta$ as the incidence matrix of an oriented signed graph \cite{ZasO} and recognized that the same was an alternate definition of {\em bidirected graphs} as in~\cite{edmonds}. From a signed graph $\Sigma$ we get many bidirected graphs $\Sigma_{\eta}$, but each of them leads back to the same signed graph $\Sigma$.

Let $A^\intercal$ denote the transpose of the matrix $A$. The incidence matrix has an important role in spectral theory. The Laplacian matrix can be derived as the row-by-row product of the matrix $B_\eta$ with itself:
\[B_\eta B_\eta^\intercal = L_{\Sigma}.\]
Notably, regardless of the orientation $\eta$ chosen, we get the same $L_{\Sigma}$. It is well known that the column-by-column product of $B_\eta$ with itself is a matrix sharing the nonzero eigenvalues with the row-by-row product. This was one motive for  Zaslavsky \cite{ZasL, Zas} to define the line graph of a signed graph as the signed graph $\mathcal{L}_C(\Sigma)=(\mathcal{L}(G),\sigma_C)$ whose signature $\sigma_C$ is determined by the  adjacency matrix is $A_{\mathcal{L}_C(\Sigma)}$ defined here:
\begin{equation}\label{eq:BTBZ}
A_{\mathcal{L}_C(\Sigma)}=2I-B_\eta^\intercal B_\eta.
\end{equation}
Unlike in the case of the Laplacian matrix of $\Sigma$, the matrix $A_{\mathcal{L}_C(\Sigma)}$ does depend on the orientation~$\eta$. On the other hand, choosing a different orientation  $\eta'$ of $\Sigma$ leads to a matrix $A_{(\mathcal{L}(G),\sigma')}$ that is switching similar to $A_{(\mathcal{L}_C(G),\sigma)}$ (cf.\ \cite{Zas}). Hence, $A_{(\mathcal{L}(G),\sigma)}$ defines a line graph up to switching similarity, so it can be used for spectral purposes.
One of the benefits of this definition is that the line graph of a signed graph with an all-negative signature is a line graph with an all-negative signature. In other words, if $-G$ is a graph $G$ whose edges are taken negatively, we get
\[\mathcal{L}_C(-G)=-\mathcal{L}(G).\]
The above fact has two evident consequences. The first one is that the iteration of the (Zaslavsky) line graph operator always give a signed graph with all-negative signature, namely $\mathcal{L}_C^{(k)}(-G)=-\mathcal{L}^{(k)}(G)$. The second one is that if we map simple unsigned graphs to the theory of signed graphs as signed graphs with the all-negative signature (instead of the all-positive, as stated in the introduction), then Zaslavsky's line graph is a direct generalization of the usual line graph defined for unsigned graphs.  We shall call this line graph the \emph{combinatorial line graph} of $\Sigma$.

However, from a spectral viewpoint, the fact that the matrix $A_{\mathcal{L}_C(\Sigma)}$ has spectrum in the real interval $(-\infty,2]$, in contrast with the usual concept of spectral graph theory for which a line graph has spectrum in the real interval $[-2,+\infty]$. Hence, the authors of \cite{BeSi} decided to modify Zaslavsky's definition to
\begin{equation}\label{eq:BTBS}
 A_{(\mathcal{L}_S(G),\sigma)}=B_\eta^\intercal B_\eta-2I.
 \end{equation}
In fact, the two definitions are virtually equivalent, as $\mathcal{L}_C(\Sigma)=-\mathcal{L}_S(\Sigma)$.  Moreover, they can be used for different purposes. The latter definition is tailored for those spectral investigations in which an unsigned graph is considered as a signed one with all-positive signature. Clearly, in this case its adjacency (and Laplacian) matrix remains unchanged and the spectral theory of unsigned graphs can easily be encapsulated into the spectral theory of signed graphs. For example, in this case  $\mathcal{L}_S(\Sigma)$ is coherent with the Laplacian and signless Laplacian spectral theories of unsigned graphs, and it can be used to investigate their spectra (cf.\ \cite{BePiSi}). Also, Hoffmann's theory of generalized line graph \cite{Hoff} fits well with the signature of $\mathcal{L}_S(\Sigma)$. For these reasons, we shall call $\mathcal{L}_S(\Sigma)$ the {\em spectral line graph}.

In Fig.~\ref{fig:G} we illustrate an example of a signed graph $\Sigma$, an orientation $\Sigma_\eta$ and the consequent line graph $\mathcal{L}_C(\Sigma)$. Here and later, positive edges are represented by solid lines and negative edges are represented by dotted lines.

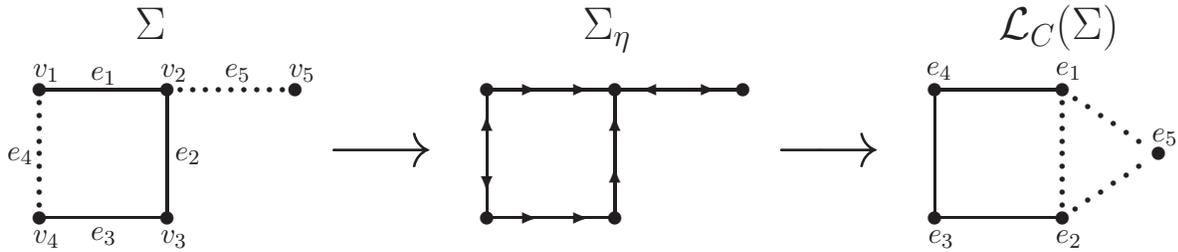
\begin{figure}[H]	\centering
\unitlength=0.85mm
\begin{picture}(175,40)(0,10)\label{Figura1}
 \thicklines

\put(0,15){\circle*{2.0}} \put(0,35){\circle*{2.0}} \put(20,15){\circle*{2.0}} \put(20,35){\circle*{2.0}} \put(40,35){\circle*{2.0}}

\put(70,15){\circle*{2.0}} \put(70,35){\circle*{2.0}} \put(90,15){\circle*{2.0}} \put(90,35){\circle*{2.0}} \put(110,35){\circle*{2.0}}

\put(140,15){\circle*{2.0}} \put(140,35){\circle*{2.0}} \put(160,15){\circle*{2.0}} \put(160,35){\circle*{2.0}} \put(175,25){\circle*{2.0}}

\put(0,15){\line(1,0){20}} \put(0,35){\line(1,0){20}} \put(140,15){\line(0,1){20}}  \put(20,15){\line(0,1){20}}

\multiput(0,15)(0,2){11}{\circle*{0.7}} \multiput(20,35)(2,0){11}{\circle*{0.7}}

\put(140,15){\line(1,0){20}}  \put(140,35){\line(1,0){20}}

%\put(160,35){\line(3,-2){15}}  \put(160,15){\line(3,2){15}}
\multiput(160,35)(2,-1.333){7}{\circle*{0.7}}
\multiput(160,15)(2,1.333){7}{\circle*{0.7}}
\multiput(160,15)(0,2){11}{\circle*{0.7}}

\put(70,15){\line(1,0){20}} \put(70,15){\vector(1,0){16}} \put(70,15){\vector(1,0){8}}
\put(90,15){\line(0,1){20}} \put(90,15){\vector(0,1){8}} \put(90,15){\vector(0,1){16}}
\put(70,35){\line(1,0){20}} \put(70,35){\vector(1,0){8}} \put(70,35){\vector(1,0){16}}
\put(70,15){\line(0,1){20}} \put(70,15){\vector(0,1){16}} \put(70,35){\vector(0,-1){16}}
\put(90,35){\line(1,0){20}} \put(90,35){\vector(1,0){16}} \put(110,35){\vector(-1,0){16}}

\put(45,23){\Huge{$\longrightarrow$}} \put(115,23){\Huge{$\longrightarrow$}}

\put(-1,37){$v_1$} \put(19,37){$v_2$} \put(39,37){$v_5$} \put(-1,11){$v_4$} \put(19,11){$v_3$}

\put(8,36.5){$e_1$} \put(21.2,24){$e_2$} \put(-5,24){$e_4$} \put(8,11.5){$e_3$} \put(29,37){$e_5$}

\put(138.7,37.3){$e_4$} \put(159,37.3){$e_1$} \put(174,27){$e_5$} \put(139,11){$e_3$} \put(159,11){$e_2$}

\put(15,43){\LARGE{$\Sigma$}} \put(85,43){\LARGE{$\Sigma_{\eta}$}} \put(150,43){\LARGE{$\mathcal{L}_C(\Sigma)$}}

\end{picture}
	\caption{A signed graph, an orientation and the combinatorial line graph.}\label{fig:G}
\end{figure}

The matrix definitions of line graphs have combinatorial analogs; in fact, Zaslavsky's original definition of the line graph of a signed graph (even prior to \cite{ZasL}) was combinatorial.
For $\Sigma=G_\sigma$ the underlying graph of $\mathcal{L}_*(\Sigma)$ is the line graph $\mathcal{L}(G)$, while the sign of the edge $ef$ ($e, f$ being the edges of $\Sigma$ with a common vertex $v$) is
\begin{equation}\label{eq:lgor}
\sigma(ef)=\left\{\begin{array}{rl}
	-\eta(ve)\eta(vf)& \text{for}~ *=C,\\
	\eta(ve)\eta(vf)& \text{for}~ *=S.
\end{array}\right.
\end{equation}
Indeed, we may orient the line graph by defining $\eta_{\mathcal{L}}(e,ef) = \eta(v,e)$ for two edges $e,f$ with common vertex $v$ in $\Sigma$ \cite{Zas}.  Then the combinatorial definition of line graph signs follows the rule (O3).

\begin{remark}
Which is the best definition of a line graph of a signed graph? Zaslavsky prefers the one defined in \eqref{eq:BTBZ} because it is consistent with the basic relationship between signs and orientation stated in (O3). Belardo and Stani\'c instead prefer \eqref{eq:BTBS} because it is the one coherent with existing spectral graph theory and it is more prominent in the literature. This led to a long discussion among the three authors of this manuscript and the late Slobodan Simi\'c. In the end, we recognize the validity of each definition, since either variant can be used and, after all, they are easily equivalent.
\end{remark}

\begin{remark}\label{rem:multi}
The identities \eqref{eq:BTBZ} and \eqref{eq:BTBS} remain valid even if $\Sigma$ contains multiple edges. A pair of edges located between the same pair of vertices form a \textit{digon}, i.e., a 2-vertex cycle, which is positive if and only if the edges share the same sign.
Here the matrix definition diverges from the combinatorial definition.  In the combinatorial definition, a digon in $\Sigma$ with edges $e$ and $f$ gives rise to a digon in the line graph having the same sign as the original digon; aside from signs, this is as in the unsigned line graph.
In the matrix definitions of $\mathcal{L}_*(\Sigma)$ it gives rise to a pair of non-adjacent vertices if the corresponding digon is negative and a pair joined by two parallel edges of the same sign if the corresponding digon is positive; this is consistent with the fact that a negative digon in $\Sigma$ disappears in the adjacency matrix $A_\Sigma$.  Zaslavsky calls this kind of line graph, where parallel edges of opposite sign cancel each other, \emph{reduced}.
Therefore, an unreduced line graph has no multiple edges if and only if $\Sigma$ has no digons, and a reduced line graph has no multiple edges if and only if $\Sigma$ has no positive digons.
%Such a $\Sigma$ is called by Zaslavsky a \textit{simply signed graph}.
\end{remark}

\subsection{Properties of line graphs}

Here are some observations that follow directly from \eqref{eq:BTBZ} and \eqref{eq:BTBS}. All triangles that arise from a star of $\Sigma$ are negative (resp.,~positive) in $\mathcal{L}_C(\Sigma)$ (resp.,~$\mathcal{L}_S(\Sigma)$). Every cycle of $\Sigma$ keeps its signature in $\mathcal{L}_C(\Sigma)$. Every even cycle of $\Sigma$ keeps its signature in $\mathcal{L}_S(\Sigma)$ and every odd cycle of $\Sigma$ reverses its signature in $\mathcal{L}_S(\Sigma)$.

\begin{theorem}\label{balanced_line_graph}
Let $G$ be an unsigned graph. Then $-\mathcal{L}_{C}(-G)$ and $\mathcal{L}_{S}(-G)$ are balanced signed graphs, and therefore switching equivalent to $+\mathcal{L}(G)$.
\end{theorem}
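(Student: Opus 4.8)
The plan is to prove both statements at once by exhibiting, for each of the two line graphs, a concrete representative of its switching class that is visibly the all-positive signature on $\mathcal{L}(G)$, and then to invoke the criterion from \cite{zas2} that a signed graph is balanced precisely when it switches to the all-positive signature. Since negating a signed graph commutes with switching (so balance and switching class are stable under negation), and since the line graph is in any case only defined up to switching similarity — choosing a different orientation $\eta'$ of $\Sigma$ replaces the adjacency matrix by a switching-similar one — it suffices to make one convenient choice of orientation of $-G$.

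The key observation is that for $\Sigma=-G$ rule (O3) reads $\eta(i,ij)\eta(j,ij)=-\sigma(ij)=1$, so the constant orientation $\eta(i,ij)=1$ on every incidence is admissible. For this $\eta$ the incidence matrix $B_\eta$ is exactly the ordinary unsigned vertex–edge incidence matrix of $G$, so $B_\eta^\intercal B_\eta$ has diagonal entries $2$ and off-diagonal entry at $(e,f)$ equal to the number of vertices shared by $e$ and $f$; hence $B_\eta^\intercal B_\eta-2I=A_{\mathcal{L}(G)}$ and $2I-B_\eta^\intercal B_\eta=-A_{\mathcal{L}(G)}$. By \eqref{eq:BTBS} this gives $\mathcal{L}_S(-G)=+\mathcal{L}(G)$ and by \eqref{eq:BTBZ} it gives $\mathcal{L}_C(-G)=-\mathcal{L}(G)$ on this representative (recovering the identity $\mathcal{L}_C(-G)=-\mathcal{L}(G)$ already recorded in Section~2); one reaches the same conclusion combinatorially from \eqref{eq:lgor}, since with $\eta\equiv 1$ every line-graph edge $ef$ acquires sign $\eta(ve)\eta(vf)=1$ in the $S$-case and $-\eta(ve)\eta(vf)=-1$ in the $C$-case. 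Therefore $\mathcal{L}_S(-G)\sim+\mathcal{L}(G)$ and $-\mathcal{L}_C(-G)=-(-\mathcal{L}(G))\sim+\mathcal{L}(G)$; both are switching equivalent to the all-positive signature, hence balanced, which is exactly the assertion. (Alternatively, the $C$-case follows from the $S$-case via $\mathcal{L}_C(\Sigma)=-\mathcal{L}_S(\Sigma)$.)

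There is no genuinely hard step here; the only points demanding care are orientation-theoretic. One must check that the constant orientation is legitimate for $-G$ — it is, by (O3) — and that since different orientations yield switching-similar line graphs, predicating balance (a switching invariant) of ``the'' line graph is well posed, so that computing with one representative settles the matter. If $G$ is permitted to be a multigraph, a further small check is harmless: a pair of parallel edges of $G$ contributes $2$ to the relevant off-diagonal entry of $B_\eta^\intercal B_\eta$, i.e.\ a pair of parallel same-sign edges in the line graph, which forms a positive digon and is consistent with balance; this matches the digon discussion in Remark~\ref{rem:multi}.
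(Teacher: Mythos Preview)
Your argument is correct and essentially coincides with the paper's Second Proof: choose the constant orientation $\eta\equiv+1$ on $-G$ (legitimate by (O3)), read off $\mathcal{L}_C(-G)=-\mathcal{L}(G)$ and $\mathcal{L}_S(-G)=+\mathcal{L}(G)$ (via \eqref{eq:lgor} or, as you add, via the matrix identities \eqref{eq:BTBZ}--\eqref{eq:BTBS}), and then note that a different orientation only switches the result. The paper also supplies an independent First Proof that instead checks positivity of each cycle type in $\mathcal{L}_S(-G)$ directly; your approach bypasses that case analysis in the same way the paper's Second Proof does.
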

\begin{proof}[First Proof.]
Recall that a balanced graph has no negative cycles. If we consider the line graph $\mathcal{L}(G)$, we distinguish three types of cycle: (i) those that arise from the cycles of $G$, (ii) those that arise from induced stars in $G$ (forming cliques), (iii) those obtained by combining the types (i) and (ii).

Let us consider $-\mathcal{L}_{C}(-G)$ and $\mathcal{L}_{S}(-G)$. We have to prove that $\mathcal{L}_{C}(-G)$ is antibalanced, or equivalently that $\mathcal{L}_{S}(-G)$ is balanced. In $\mathcal{L}_{S}(\Sigma)$ the signed cycles of type (i), originating from cycles $C_k$ of $\Sigma$, get the sign $(-1)^k\sigma(C_k)$. Hence, they are transformed into positive cycles of $\mathcal{L}_S(\Sigma)$ if and only if $\Sigma\sim-G$. Consider next the cycles of type (ii). The cliques $(K_t,\sigma)$ of $\mathcal{L}_S(\Sigma)$, originating from an induced $K_{1,t}$ of $G$, are switching equivalent to $+K_n$. To see the latter, without loss of generality one can choose the biorientation of $K_{1,n}$ for which the  vertex-edge incidence at the center of the star is positive (the arrows are inward directed), so the obtained clique is indeed $+K_n$. These cycles are always positive, regardless of the signature of $\Sigma$. Finally, for the cycles of type (iii), we know from \cite{zas4} that the signs of a set of cycles that span the cycle space determine all the signs. Hence, the cycles of type (iii) are positive if and only if the cycles of type (i) are positive, that is, $\Sigma=-G$.
\end{proof}
\begin{proof}[Second Proof.]
Choose the orientation for $-G$ in which $\eta(v,e)=+1$ for every incident vertex and edge.  Then $\mathcal{L}_{C}(-G)$ is easily seen to be all negative by the combinatorial definition \eqref{eq:lgor} of edge signs, thus $\mathcal{L}_{C}(-G) = -\mathcal{L}(G)$, which is antibalanced.  Then, $\mathcal{L}_{S}(-G) = -\mathcal{L}_{C}(-G) = +\mathcal{L}(G)$, which is balanced.  Choosing a different orientation for $-G$ has the effect of switching both line graphs, so it does not change the state of balance or antibalance.
\end{proof}

We conclude this section by analysing balance and the degree of imbalance of line graphs.
Because $\mathcal{L}_{S}(\Sigma)=-\mathcal{L}_{C}(\Sigma)$, balance of the combinatorial line graph $\mathcal{L}_{C}(\Sigma)$ is equivalent to antibalance of the spectral line graph $\mathcal{L}_{S}(\Sigma)$, and balance of the latter is equivalent to antibalance of the former.

\begin{theorem}\label{the:lg} Let $\Sigma=G_{\sigma}$ be a signed graph of order $n$ and size $m$. The following hold true:
	\begin{enumerate}[\rm(i)]

		\item\label{the:lgCb} $\mathcal{L}_{C}(\Sigma)$ is balanced (and $\mathcal{L}_{S}(\Sigma)$ is antibalanced) if and only if $\Sigma$ is a disjoint union of paths and positive cycles.
		
		\item\label{the:lgSb} $\mathcal{L}_{S}(\Sigma)$ is balanced (and $\mathcal{L}_{C}(\Sigma)$ is antibalanced) if and only if $\Sigma$ is antibalanced.
				
		\item\label{the:lgSnu}  $\ell(\mathcal{L}_{S}(\Sigma)) \geq \nu(\mathcal{L}_{S}(\Sigma)) = \ell(-\Sigma)$.
		
		\item\label{the:lgfi} $\ell(\mathcal{L}_{C}(\Sigma))\geq \sum_{v\in V(\Sigma)} \big\lfloor \frac{(d(v)-1)^2}{4} \big\rfloor$.
		
	\end{enumerate}
\end{theorem}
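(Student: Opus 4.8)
The plan is to treat the four parts separately, using only the cycle-sign rules recorded just above the statement: in $\mathcal{L}_C(\Sigma)$ every triangle arising from a star is negative and every cycle of $\Sigma$ keeps its sign, whereas in $\mathcal{L}_S(\Sigma)$ every star triangle is positive, every even cycle of $\Sigma$ keeps its sign, and every odd cycle reverses it. I take $G$ simple throughout; the digon case needs only the bookkeeping of Remark~\ref{rem:multi}.

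For part (i): if some $v\in V(\Sigma)$ had $d(v)\ge 3$, three edges at $v$ would span a negative triangle of $\mathcal{L}_C(\Sigma)$, so balance of $\mathcal{L}_C(\Sigma)$ forces $\Delta(\Sigma)\le 2$; thus $\Sigma$ is a disjoint union of paths and cycles, and a cycle component $C_k$ maps to an equally-signed cycle of $\mathcal{L}(G)$, which must be positive, so $\Sigma$ is a disjoint union of paths and positive cycles. Conversely, for such $\Sigma$ the signed line graph $\mathcal{L}_C(\Sigma)$ is a disjoint union of signed paths and positive cycles, hence balanced. For part (ii): a length-$k$ cycle $C$ of $\Sigma$ maps to a length-$k$ cycle of $\mathcal{L}(G)$ whose sign in $\mathcal{L}_S(\Sigma)$ is $(-1)^k\sigma(C)$; balance of $\mathcal{L}_S(\Sigma)$ then forces $\sigma(C)=(-1)^k$ for every cycle of $\Sigma$, that is, $\Sigma$ antibalanced, and conversely, when $\Sigma$ is antibalanced all these image cycles are positive, and so are the star triangles, so since a generating set of cycle signs determines the rest (as in the proof of Theorem~\ref{balanced_line_graph}) $\mathcal{L}_S(\Sigma)$ is balanced. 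In both parts the parenthetical assertion is immediate from $\mathcal{L}_S(\Sigma)=-\mathcal{L}_C(\Sigma)$.

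For part (iii), I would first record the general bound $\ell\ge\nu$: given a minimum balancing edge set $D$ of a signed graph, choosing one endpoint of each edge of $D$ produces a vertex set $W$ with $|W|\le|D|$ for which $\Sigma-W$ is a subgraph of $\Sigma-D$, hence balanced. For the equality, the vertices of $\mathcal{L}_S(\Sigma)$ are the edges of $\Sigma$, and deleting a vertex set $W$ of $\mathcal{L}_S(\Sigma)$ is the same as deleting the corresponding edge set $E_W$ of $\Sigma$, i.e.\ $\mathcal{L}_S(\Sigma)-W=\mathcal{L}_S(\Sigma-E_W)$; by part (ii) this is balanced exactly when $\Sigma-E_W$ is antibalanced, that is, when $-(\Sigma-E_W)$ is balanced, so minimizing $|E_W|$ gives $\nu(\mathcal{L}_S(\Sigma))=\ell(-\Sigma)$.

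For part (iv): the $d(v)$ edges at a vertex $v$ of $\Sigma$ induce in $\mathcal{L}_C(\Sigma)$ a clique $K_{d(v)}$ all of whose triangles are negative, hence (triangles generating the cycle space of a complete graph) a clique switching equivalent to $-K_{d(v)}$, whose frustration index equals the edge biparticity of $K_{d(v)}$, namely $\binom{d(v)}{2}-\lfloor d(v)^2/4\rfloor=\lfloor (d(v)-1)^2/4\rfloor$; and since $G$ is simple, distinct vertices of $\Sigma$ yield edge-disjoint cliques. If $D$ is a minimum balancing edge set of $\mathcal{L}_C(\Sigma)$, then for each $v$ the set $D$ restricted to that clique balances it, so it contains at least $\lfloor (d(v)-1)^2/4\rfloor$ of its edges; summing over the edge-disjoint cliques gives $\ell(\mathcal{L}_C(\Sigma))=|D|\ge\sum_{v\in V(\Sigma)}\lfloor (d(v)-1)^2/4\rfloor$. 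I expect part (iv) to be where the real work lies — identifying the star cliques with $-K_{d(v)}$ up to switching, evaluating $\ell(-K_{d(v)})$, and combining the contributions via superadditivity of the frustration index over edge-disjoint subgraphs; this superadditivity is also exactly why only an inequality is obtained, since the star cliques need not exhaust $\mathcal{L}_C(\Sigma)$ and edges outside them (or interactions among cliques) may force further deletions.
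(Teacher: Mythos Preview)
Your proposal is correct and follows essentially the same approach as the paper's proof: both arguments use the vertex-triangle/line-cycle decomposition of cycles in $\mathcal{L}(G)$ for (i)--(ii), the identification of vertex deletions in the line graph with edge deletions in $\Sigma$ together with part (ii) for (iii), and edge-disjointness of the antibalanced star cliques $-K_{d(v)}$ with the computation of $\ell(-K_{d(v)})$ for (iv). The only cosmetic differences are that you argue (ii) on the $\mathcal{L}_S$ side while the paper argues antibalance of $\mathcal{L}_C$, and you compute $\ell(-K_{d(v)})$ via the edge-biparticity identity $\binom{d}{2}-\lfloor d^2/4\rfloor$ whereas the paper describes the optimal bipartition directly.
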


\begin{proof}
\eqref{the:lgCb}  Balance of $\mathcal{L}_{C}(\Sigma)$ means that $\Sigma$ does not contain a vertex of degree 3 or greater, as the corresponding edges produce negative triangles. Evidently, $\Sigma$ cannot contain negative cycles, because this leads to negative cycles in $\mathcal{L}_{C}(\Sigma)$. If $\Sigma$ is a disjoint union of paths and positive cycles, then $\mathcal{L}_{C}(\Sigma)$ is again a disjoint union of paths and positive cycles.

\eqref{the:lgSb}  A line graph $\mathcal{L}(G)$ has three kinds of cycle.  A vertex triangle corresponds to three edges incident with a single vertex of $G$; all vertex triangles are negative in $\mathcal{L}_{C}(\Sigma)$.  A line cycle is derived from a cycle $C$ in $G$ and has the same sign in $\mathcal{L}_{C}(\Sigma)$.  The remaining cycles are obtained by concatenation of cycles of the first two kinds.  It follows that $\mathcal{L}_{C}(\Sigma)$ is antibalanced if and only if every line cycle is antibalanced, thus if and only if $\Sigma$ is antibalanced.

\eqref{the:lgSnu}  An edge set $A$ in $-\Sigma$ is a set $A$ of vertices in $\mathcal{L}_{S}(\Sigma)$; and $\mathcal{L}_{S}(\Sigma) \setminus A = \mathcal{L}_{S}(\Sigma \setminus A)$.  By \eqref{the:lgSb}, $\mathcal{L}_{S}(\Sigma \setminus A)$ is balanced if and only if $-\Sigma \setminus A$ is balanced.  Thus, the smallest size of an edge set $A$ such that $-\Sigma \setminus A$ is balanced, which is $\ell(-\Sigma)$, equals the smallest size of a vertex set $A$ such that $\mathcal{L}_{S}(\Sigma \setminus A)$ is balanced, which is $\nu(\mathcal{L}_{S}(\Sigma))$.

The inequality follows from the general observation that $\ell \geq \nu$ for every signed graph.

\eqref{the:lgfi}   A vertex of degree $d(v)$ in $\Sigma$ generates in $\mathcal{L}_{C}(\Sigma)$ an antibalanced vertex clique $-K_{d(v)}$.  An edge set $B$ in the line graph such that $\mathcal{L}_{C}(\Sigma) \setminus B$ is balanced must contain enough edges to make each such clique balanced; this number is $\ell(-K_{d(v)}) = \big\lfloor \frac{(d(v)-1)^2}{4} \big\rfloor$, obtained by dividing the vertices of $K_{d(v)}$ into two nearly equal sets and deleting the edges within each set.  Each edge of the line graph is in only one vertex clique, so the minimum number of edges required to balance every vertex clique is the sum of these quantities.  That proves the inequality.
\end{proof}

We do not expect equality in part (iv) because deleting the edges as in the proof may not eliminate all negative cycles in $\mathcal{L}_{C}(\Sigma)$. The problem of finding an exact formula for $\ell(\mathcal{L}_{C}(\Sigma))$ or $\ell(\mathcal{L}_{S}(\Sigma))$ in terms of $\Sigma$, or even a good lower bound that involves the signs of $\Sigma$, seems difficult.

\section{Total Graph(s)}\label{sec:Gen}

Recall (say, from \cite[p.~64]{CvDS}) that, for a given graph $G$, the {\em total graph} $\mathcal{T}(G)$  is the graph obtained by combining the adjacency matrix of a graph with the adjacency matrix of its line graph and its vertex-edge incidence matrix. Precisely, the adjacency matrix of $\mathcal{T}(G)$ is given by
 \begin{equation*}\label{eq:Tu(G)}
A_{\mathcal{T}(G)}=\begin{pmatrix}
A_{G}& B\\ B^\intercal &A_{\mathcal{L}(G)}
\end{pmatrix},\end{equation*}
where $\mathcal{L}(G)$ denotes the line graph of $G$ and $B$ is the (unoriented) incidence matrix.
Is it possible to have an analogous concept for signed graphs?

We will give a positive answer to the latter question. However, since we have multiple possibilities for line graphs of signed graphs, we build multiple total graphs.

\subsection{Definitions of a total graph}

\begin{definition}\label{total}
The \textit{total graph} of $\Sigma=G_\sigma$ is the signed graph determined by
\begin{equation}\label{eq:T(G)}
A_{\mathcal{T}_{*}(\Sigma)}=\begin{pmatrix}
A_{\Sigma}& B_\eta\\ B_\eta^\intercal &A_{\mathcal{L}_{*}(\Sigma_\eta)}
\end{pmatrix},\end{equation}
where $*\in\{C,S\}$.
\end{definition}

In other words, in the combinatorial view, $\mathcal{T}_{*}(\Sigma)$ consists of two induced subgraphs, $\Sigma$ and $\mathcal{L}_{*}(\Sigma_\eta)$, along with edges joining a vertex $v$ of $\Sigma$ to all vertices of $\mathcal{L}_{*}(\Sigma_\eta)$ that arise from the edges  which are incident with $v$ in $\Sigma$. The signature on these connecting edges is given by~$\eta$, i.e., for a root-graph vertex $v$ and an incident line-graph vertex $e$, $\sigma_\mathcal{T}(ve) = \eta(v,e)$.  Note that this does not specify an orientation of the edge $ve$.  One may adopt the convention that $\eta_\mathcal{T}(v,ve) = +1$, or any other convention, according to convenience.  We do not need to do that because we have not defined an incidence matrix for $\mathcal{T}_{*}(\Sigma)$.

To fix the notation, $\mathcal{T}_{C}(\Sigma)$ and $\mathcal{T}_{S}(\Sigma)$ denote the total graphs defined by the combinatorial line graph \eqref{eq:BTBZ} and the spectral line graph \eqref{eq:BTBS}. Accordingly, we shall call them respectively the \textit{combinatorial total graph} and the \textit{spectral total graph}. If we want to consider both variants, we shall write $\mathcal{T}_{*}(\Sigma)$ instead.
We need to show that our definition, regardless of $\Sigma\in[\Sigma]$ and of its chosen orientation $\eta$, gives rise to the same signed graph up to switching equivalence.

In Fig.~\ref{fig:T} we illustrate an example of a total graph of a signed graph. The root graph and the orientation are taken from Fig.~\ref{fig:G}.

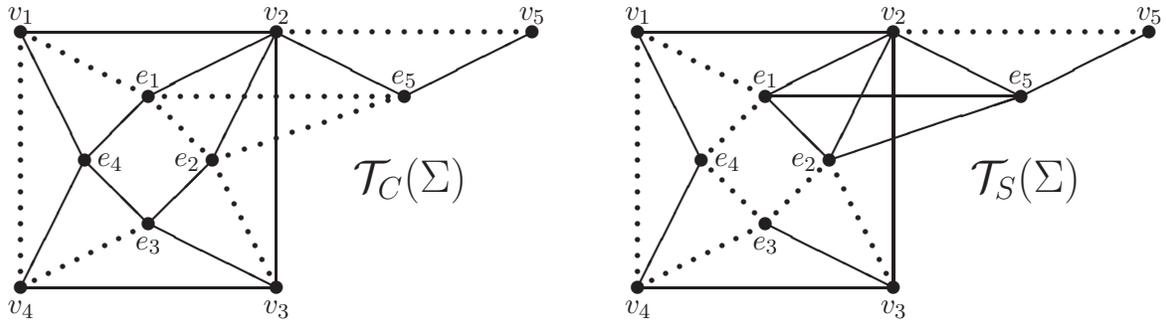
\begin{figure}[H]	\centering
\unitlength=0.85mm
\begin{picture}(80,50)(10,-5)
 \thicklines

\put(0,0){\circle*{2.0}} \put(0,40){\circle*{2.0}} \put(40,40){\circle*{2.0}} \put(40,0){\circle*{2.0}} \put(80,40){\circle*{2.0}}

\put(10,20){\circle*{2.0}} \put(20,10){\circle*{2.0}} \put(20,30){\circle*{2.0}} \put(30,20){\circle*{2.0}} \put(60,30){\circle*{2.0}}

\put(0,0){\line(1,0){40}} \put(0,40){\line(1,0){40}} \put(40,0){\line(0,1){40}}

\multiput(0,0)(0,2.5){16}{\circle*{0.7}} \multiput(40,40)(2.5,0){16}{\circle*{0.7}}

\multiput(0,40)(2.2,-1.1){10}{\circle*{0.7}} \multiput(0,0)(2.2,1.1){10}{\circle*{0.7}}

\multiput(40,0)(-1.1,2.2){10}{\circle*{0.7}} \multiput(20,30)(1.7,-1.7){6}{\circle*{0.7}}

\multiput(20,30)(2.5,0){16}{\circle*{0.7}} \multiput(30,20)(2.5,0.85){13}{\circle*{0.7}}

\put(10,20){\line(-1,2){10}} \put(10,20){\line(-1,-2){10}} \put(10,20){\line(1,1){10}} \put(10,20){\line(1,-1){10}}
\put(60,30){\line(-2,1){20}} \put(60,30){\line(2,1){20}} \put(20,10){\line(1,1){10}} \put(20,10){\line(2,-1){20}}
\put(40,40){\line(-1,-2){10}} \put(40,40){\line(-2,-1){20}}

\put(-2,42){$v_1$} \put(38,42){$v_2$} \put(38,-4){$v_3$} \put(-2,-4){$v_4$} \put(78,42){$v_5$}

\put(18,32){$e_1$} \put(24,19){$e_2$} \put(12,19){$e_4$} \put(18,6){$e_3$} \put(58,32){$e_5$}

\put(52,15){\LARGE{$\mathcal{T}_C(\Sigma)$}}

\end{picture}
\begin{picture}(70,50)(-5,-5)
 \thicklines

\put(0,0){\circle*{2.0}} \put(0,40){\circle*{2.0}} \put(40,40){\circle*{2.0}} \put(40,0){\circle*{2.0}} \put(80,40){\circle*{2.0}}

\put(10,20){\circle*{2.0}} \put(20,10){\circle*{2.0}} \put(20,30){\circle*{2.0}} \put(30,20){\circle*{2.0}} \put(60,30){\circle*{2.0}}

\put(0,0){\line(1,0){40}} \put(0,40){\line(1,0){40}} \put(40,0){\line(0,1){40}}

\multiput(0,0)(0,2.5){16}{\circle*{0.7}} \multiput(40,40)(2.5,0){16}{\circle*{0.7}}

\multiput(0,40)(2.2,-1.1){10}{\circle*{0.7}} \multiput(0,0)(2.2,1.1){10}{\circle*{0.7}}

\multiput(40,0)(-1.1,2.2){10}{\circle*{0.7}}

\multiput(20,30)(-1.7,-1.7){6}{\circle*{0.7}} \multiput(20,10)(-1.7,1.7){6}{\circle*{0.7}} \multiput(20,10)(1.7,1.7){6}{\circle*{0.7}} \put(20,30){\line(1,-1){10}}
\put(20,30){\line(1,0){40}} \put(30,20){\line(3,1){30}}
%\multiput(20,30)(2.5,0){16}{\circle*{0.7}} \multiput(30,20)(2.5,0.85){13}{\circle*{0.7}}

\put(10,20){\line(-1,2){10}} \put(10,20){\line(-1,-2){10}}
%\put(10,20){\line(1,1){10}} \put(10,20){\line(1,-1){10}}
\put(60,30){\line(-2,1){20}} \put(60,30){\line(2,1){20}}
%\put(20,10){\line(1,1){10}}
\put(20,10){\line(2,-1){20}} \put(40,40){\line(-1,-2){10}} \put(40,40){\line(-2,-1){20}}

\put(-2,42){$v_1$} \put(38,42){$v_2$} \put(38,-4){$v_3$} \put(-2,-4){$v_4$} \put(78,42){$v_5$}

\put(18,32){$e_1$} \put(24,19){$e_2$} \put(12,19){$e_4$} \put(18,6){$e_3$} \put(58,32){$e_5$}

\put(52,15){\LARGE{$\mathcal{T}_S(\Sigma)$}}

\end{picture}
	\caption{The combinatorial and the spectral total graphs resulting from $\Sigma_\eta$ depicted in Fig.~\ref{fig:G}.  }\label{fig:T}
\end{figure}

We show that our definitions of a total graph are stable under reorientation and switching.  For reorientation we have the first lemma.

\begin{lemma}\label{le1} Let $\Sigma=G_\sigma$ be a signed graph, and $\Sigma_\eta$ and $\Sigma_{\eta'}$ two orientations of $\Sigma$.  Then $\mathcal{T}_{*}(\Sigma_{\eta})$ and $\mathcal{T}_{*}(\Sigma_{\eta'})$ are switching equivalent, for each $*\in\{C,S\}$.
\end{lemma}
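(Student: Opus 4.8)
The plan is to show that the two total graphs have switching-similar adjacency matrices by exhibiting a block-diagonal switching matrix governed by how $\eta$ and $\eta'$ differ. By the matrix characterization of switching recalled in the introduction, switching similarity of $A_{\mathcal{T}_*(\Sigma_\eta)}$ and $A_{\mathcal{T}_*(\Sigma_{\eta'})}$ is exactly switching equivalence of $\mathcal{T}_*(\Sigma_\eta)$ and $\mathcal{T}_*(\Sigma_{\eta'})$.

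First I would record the elementary fact that any two orientations $\eta,\eta'$ of the same signed graph $\Sigma=G_\sigma$ differ only by a set $F\subseteq E(G)$ of \emph{reorientations}: for a fixed edge $e=ij$, rules (O2)--(O3) permit precisely the two assignments $(\eta(i,e),\eta(j,e))$ and $(-\eta(i,e),-\eta(j,e))$, since their product must equal $-\sigma(ij)$. Hence, if $D_F$ is the diagonal $\{\pm1\}$-matrix indexed by $E(G)$ with $-1$ exactly in the positions of $F$ and $+1$ elsewhere, then reorienting on $F$ negates the corresponding columns of the incidence matrix, i.e.\ $B_{\eta'}=B_\eta D_F$; note $D_F^2=I_m$ where $m=|E(G)|$.

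Next I would push this through the matrix definitions \eqref{eq:BTBZ} and \eqref{eq:BTBS}. Writing $A_{\mathcal{L}_*(\Sigma_\eta)}=\epsilon_*\,(B_\eta^\intercal B_\eta-2I)$ with $\epsilon_C=-1$ and $\epsilon_S=+1$, and using that $D_F$ commutes with the scalar matrix $2I$,
\[
A_{\mathcal{L}_*(\Sigma_{\eta'})}=\epsilon_*\bigl(D_F B_\eta^\intercal B_\eta D_F-2I\bigr)=D_F\bigl(\epsilon_*(B_\eta^\intercal B_\eta-2I)\bigr)D_F=D_F\,A_{\mathcal{L}_*(\Sigma_\eta)}\,D_F,
\]
for each $*\in\{C,S\}$. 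Finally, with $n=|V(G)|$, set $S=\begin{pmatrix} I_n & 0\\ 0 & D_F\end{pmatrix}$, a diagonal $\{\pm1\}$-matrix and hence a switching matrix for the vertex set $V(G)\cup E(G)$ of $\mathcal{T}_*(\Sigma)$ (with $S^{-1}=S$). Using the block form \eqref{eq:T(G)} together with the two displays above,
\[
S^{-1}A_{\mathcal{T}_*(\Sigma_\eta)}S=\begin{pmatrix} A_\Sigma & B_\eta D_F\\ D_F B_\eta^\intercal & D_F A_{\mathcal{L}_*(\Sigma_\eta)}D_F\end{pmatrix}=\begin{pmatrix} A_\Sigma & B_{\eta'}\\ B_{\eta'}^\intercal & A_{\mathcal{L}_*(\Sigma_{\eta'})}\end{pmatrix}=A_{\mathcal{T}_*(\Sigma_{\eta'})},
\]
which yields the claim.

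There is no real obstacle here; the only points needing care are the opening observation that reorientation acts on $B_\eta$ precisely by column sign changes, and the bookkeeping that $S$ fixes the first (root-graph) block while conjugating the second (line-graph) block by $D_F$ — exactly what makes the off-diagonal incidence blocks transform from $B_\eta$ to $B_{\eta'}$. One could instead argue combinatorially from \eqref{eq:lgor}, reversing $\eta(v,e)$ for each $e$ incident to $F$, but the matrix computation is cleaner and disposes of both $*=C$ and $*=S$ at once.
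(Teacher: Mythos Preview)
Your proof is correct and follows essentially the same approach as the paper: both encode the reorientation as a diagonal $\pm1$ matrix on the edge indices, observe $B_{\eta'}=B_\eta D_F$, and conjugate the block adjacency matrix of the total graph by $\begin{pmatrix} I_n & 0 \\ 0 & D_F \end{pmatrix}$. Your treatment of both $*\in\{C,S\}$ simultaneously via $\epsilon_*$ is a small notational convenience over the paper, which argues only the $C$ case explicitly and leaves $S$ implicit.
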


\begin{proof}
For the sake of readability, we will restrict the discussion to the combinatorial line graph defined by Zaslavsky. Hence, hereafter $\mathcal{L}(\Sigma):=\mathcal{L}_C(\Sigma)$ and $\mathcal{T}(\Sigma_{\eta}):=\mathcal{T}_{C}(\Sigma_{\eta})$.

Let $G=(V,E)$, where $|V|=n$ and $|E|=m$. Suppose that $\eta$ and $\eta'$ differ on some set $F\subseteq E$, and let $B_\eta$ and $B_{\eta'}$ be the corresponding vertex-edge incidence matrices, respectively. Let $S=(s_{ij})$ be the $m\times m$ diagonal matrix such that $s_{ii}=-1$ if $e_i\in F$ and $s_{ii}=1$, otherwise. Then $B_{\eta'}=B_\eta S$. Since $S=S^\intercal=S^{-1}$, in view of \eqref{eq:BTBZ}, we have
	$$A_{\mathcal{L}(\Sigma_{\eta'})}=2I-B_{\eta'}^\intercal B_{\eta'} =S^{\intercal}(2I-B_{\eta}^\intercal B_{\eta})S=S^{\intercal}A_{\mathcal{L}(\Sigma_{\eta})}S.$$

Therefore,
	\begin{align*}
	A_{\mathcal{T}(\Sigma_{\eta'})}=&\begin{pmatrix}
	A_{\Sigma} & B_{\eta'}\\ B_{\eta'}^\intercal &  A_{\mathcal{L}(\Sigma_{\eta'}})
	\end{pmatrix}\\=&\begin{pmatrix}
	A_{\Sigma} & B_{\eta}S\\ S^\intercal B_{\eta}^\intercal &  S^\intercal A_{\mathcal{L}(\Sigma_{\eta})}S
	\end{pmatrix}\\=&\begin{pmatrix}
	I & O\\ O &  S^\intercal
	\end{pmatrix}\begin{pmatrix}
	A_{\Sigma} & B_{\eta}\\ B_{\eta}^\intercal &  A_{\mathcal{L}(\Sigma_{\eta})}
	\end{pmatrix}\begin{pmatrix}
	I & O\\ O &  S
	\end{pmatrix}\\=&\begin{pmatrix}
	I & O\\ O &  S
	\end{pmatrix}^{-1}A_{\mathcal{T}(\Sigma_{\eta})}\begin{pmatrix}
	I & O\\ O &  S
	\end{pmatrix}
    .\end{align*}
 This completes the proof.
\end{proof}

Next, we prove that switching equivalent signed graphs produce switching equivalent total graphs.

\begin{lemma}\label{le2}
If $\Sigma$ and $\Sigma'$ are switching equivalent, then $\mathcal{T}_{*}(\Sigma)$ and $\mathcal{T}_{*}(\Sigma')$ are switching equivalent as well, for each $*\in\{C,S\}$.
\end{lemma}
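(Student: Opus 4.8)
The plan is to realize the switching of $\Sigma$ by a diagonal $\pm1$ conjugation, transport it to a compatible orientation of $\Sigma'$, and then conjugate the block matrix in \eqref{eq:T(G)}. Write $G=(V,E)$ with $|V|=n$, $|E|=m$, and suppose $\Sigma'=\Sigma^{U}$ for a vertex set $U\subseteq V$; it suffices to treat a single switching set, since this already generates the full switching-equivalence relation. Let $S_V$ be the $n\times n$ diagonal $\pm1$ matrix with $(S_V)_{vv}=-1$ precisely when $v\in U$, so that $A_{\Sigma'}=S_V A_\Sigma S_V$ (recall $S_V=S_V^{\intercal}=S_V^{-1}$). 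Fix an orientation $\eta$ of $\Sigma$ and define $\eta'$ on $\Sigma'$ by $\eta'(v,e)=(S_V)_{vv}\,\eta(v,e)$.

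First I would check that $\eta'$ is a legitimate orientation of $\Sigma'$. Rules (O1) and (O2) are immediate from the definition, and for (O3), given an edge $e=ij$,
\[
\eta'(i,e)\,\eta'(j,e)=(S_V)_{ii}(S_V)_{jj}\,\eta(i,e)\,\eta(j,e)=-(S_V)_{ii}(S_V)_{jj}\,\sigma(e)=-\sigma'(e),
\]
because $e$ reverses sign under switching at $U$ exactly when $(S_V)_{ii}(S_V)_{jj}=-1$. Since $S_V$ is diagonal, by construction $B_{\eta'}=S_V B_\eta$, hence $B_{\eta'}^{\intercal}B_{\eta'}=B_\eta^{\intercal}S_V^{\intercal}S_V B_\eta=B_\eta^{\intercal}B_\eta$; so by \eqref{eq:BTBZ} (for $*=C$) or \eqref{eq:BTBS} (for $*=S$) the line-graph block is unchanged, $A_{\mathcal{L}_*(\Sigma'_{\eta'})}=A_{\mathcal{L}_*(\Sigma_\eta)}$. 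Setting $\widehat S=\left(\begin{smallmatrix}S_V&O\\ O&I_m\end{smallmatrix}\right)$ and multiplying out, one gets
\[
A_{\mathcal{T}_*(\Sigma'_{\eta'})}=\begin{pmatrix}S_V A_\Sigma S_V & S_V B_\eta\\ B_\eta^{\intercal}S_V & A_{\mathcal{L}_*(\Sigma_\eta)}\end{pmatrix}=\widehat S^{-1}\,A_{\mathcal{T}_*(\Sigma_\eta)}\,\widehat S,
\]
so $\mathcal{T}_*(\Sigma'_{\eta'})$ and $\mathcal{T}_*(\Sigma_\eta)$ are switching equivalent. Finally, by Lemma~\ref{le1} the total graph is independent of the chosen orientation up to switching equivalence, so this conclusion transfers to whatever orientations are used to define $\mathcal{T}_*(\Sigma')$ and $\mathcal{T}_*(\Sigma)$.

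I do not expect a genuine obstacle here: once $\eta'$ is chosen as above, the argument is a routine $2\times2$ block computation. The only substantive point is picking the orientation of $\Sigma'$ so that $B_{\eta'}=S_V B_\eta$ — an arbitrary orientation of $\Sigma'$ would fail to make the off-diagonal blocks line up — together with the appeal to Lemma~\ref{le1} to absorb that choice of orientation.
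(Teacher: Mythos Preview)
Your argument is correct and follows essentially the same route as the paper: pick an orientation of $\Sigma'$ with incidence matrix $S_VB_\eta$, observe that $B_{\eta'}^{\intercal}B_{\eta'}=B_\eta^{\intercal}B_\eta$ so the line-graph block is unchanged, and conjugate the block matrix of \eqref{eq:T(G)} by $\left(\begin{smallmatrix}S_V&O\\ O&I\end{smallmatrix}\right)$. Your version is a bit more explicit in two places --- verifying (O1)--(O3) for $\eta'$ rather than simply asserting that $S_VB_\eta$ is an incidence matrix of $\Sigma'$, and invoking Lemma~\ref{le1} at the end to absorb the specific choice of orientation --- but these are expansions of steps the paper takes for granted, not a different method.
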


\begin{proof} The notation is the same as in Lemma \ref{le1}. Since $\Sigma$ and $\Sigma'$ are switching equivalent, their adjacency matrices are switching similar. Hence, $A_{\Sigma}=S^{-1}A_{\Sigma'}S$ for some switching matrix~$S$. Observe that if $B=B_{\eta}$ is a vertex-edge incidence matrix of~$\Sigma$, then $B'=SB_{\eta}$ is a vertex-edge incidence matrix of $\Sigma'$. Additionally, in view of \eqref{eq:BTBZ}, we have $A_{\mathcal{L}(\Sigma)}=2I-B^\intercal B$.

Therefore, (for $\Sigma'$) we have
	\begin{align*}
	A_{\mathcal{T}(\Sigma')}=&\begin{pmatrix}
	A_{\Sigma'} & B'\\ B'^\intercal &  A_{\mathcal{L}(\Sigma')}
	\end{pmatrix}=\begin{pmatrix}
	A_{\Sigma'} & B'\\ B'^\intercal & 2I- B'^\intercal B'
	\end{pmatrix}\\=&\begin{pmatrix}
	S^{-1}A_{\Sigma}S & SB\\ (SB)^\intercal & 2I- B^\intercal (S^\intercal S) B
	\end{pmatrix}\\= & \begin{pmatrix}
	S & O\\ O &  I
	\end{pmatrix}^{-1}\begin{pmatrix}
	A_{\Sigma} & B\\ B^\intercal &  A_{\mathcal{L}(\Sigma)}
	\end{pmatrix}\begin{pmatrix}
	S & O\\ O &  I
	\end{pmatrix}\\=&\begin{pmatrix}
	S & O\\ O &  I
	\end{pmatrix}^{-1} A_{\mathcal{T}(\Sigma)}\begin{pmatrix}
	S & O\\ O &  I
	\end{pmatrix} .
	\end{align*}
Hence, $\mathcal{T}(\Sigma)$ is switching equivalent to $\mathcal{T}(\Sigma')$, and we are done.
\end{proof}

In view of Lemmas \ref{le1} and \ref{le2} the definition given by \eqref{eq:T(G)} can be used for spectral investigations.

\begin{remark}
A careful reader has probably noticed that the switching matrix in Lemma~$\ref{le1}$ is the one realizing switching equivalence between the line graphs, while the switching matrix in Lemma~$\ref{le2}$ is the one realizing switching equivalence between the root signed  graphs. In general, if we have two switching equivalent total graphs, then the switching matrix will be obtained by combining the switching matrices of the corresponding root and line graphs.
\end{remark}

\begin{remark}\label{rem:stotal}
The spectral total graph $\mathcal{T}_{S}(\cdot)$ does not generalize the total graph of an unsigned graph, because with the $\mathcal{T}_{S}$ operator the total graph of an all-positive signed graph does not have an all-positive signature, as the convention of treating unsigned graphs as all positive and the definition of the unsigned total graph would imply. On the other hand, if we consider unsigned graphs as signed graphs with the all-negative signature, then $\mathcal{T}_C(-G)=-\mathcal{T}(G)$, so that $\mathcal{T}_C$ can be considered as the generalization to signed graphs of the unsigned total graph operator.
This observation lends some support to using the line graph operator $\mathcal{L}_C$ in spectral graph theory and treating unsigned graphs as all negative, though contrary to existing custom.
\end{remark}

\subsection{Properties of total graphs}

Now we study some structural and spectral properties of $\mathcal{T}_C(\Sigma)$ and $\mathcal{T}_S(\Sigma)$. We begin by computing the number of triangles of $\mathcal{T}_{*}(\Sigma)$.

\begin{theorem}\label{the:tri} Let an unsigned graph $G$ have order $n$, size $m$, degree sequence $(d_1, d_2, \ldots, d_n)$, and $t$ triangles. Then the number of triangles of $\mathcal{T}_{*}(G)$ is $2t+m+\sum_{i=1}^{n}{d_i+1\choose 3}$.
\end{theorem}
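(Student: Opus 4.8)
The plan is to observe first that the signature is irrelevant here: the underlying graph of $\mathcal{T}_*(G)$ is the ordinary total graph $\mathcal{T}(G)$ (for a simple unsigned graph no parallel edges of opposite sign arise, so no cancellation occurs in the line-graph block), hence it suffices to count triangles in $\mathcal{T}(G)$, whose vertex set is $V\cup E$ with $|V|=n$ and $|E|=m$. I would then classify each triangle of $\mathcal{T}(G)$ by how many of its three vertices lie in $V$ and how many lie in $E$. This yields four mutually exclusive types, so the total is simply the sum of the four partial counts, with no double counting to track.

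Next I would evaluate each type using the total-graph adjacency rules ($u\sim v$ for $u,v\in V$ iff $uv\in E$; $v\sim e$ for $v\in V$, $e\in E$ iff $v$ is an endpoint of $e$; $e\sim f$ for $e,f\in E$ iff they share an endpoint). A triangle with all three vertices in $V$ is precisely a triangle of $G$, contributing $t$. A triangle with two vertices $u,v\in V$ and one vertex $e\in E$ forces $e$ to be incident with both $u$ and $v$, hence $e=uv$; so each edge of $G$ yields exactly one such triangle, contributing $m$. A triangle with one vertex $v\in V$ and two vertices $e,f\in E$ requires only that $v$ be an endpoint of both $e$ and $f$ (the edge $ef$ of $\mathcal{L}(G)$ is then automatic, since $e$ and $f$ share the endpoint $v$), so these are in bijection with unordered pairs of edges meeting at $v$, contributing $\sum_{i=1}^{n}\binom{d_i}{2}$.

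The step needing a little care is the count of triangles with all three vertices $e,f,g\in E$, i.e.\ triangles of the line graph $\mathcal{L}(G)$. Here I would invoke the classical dichotomy for simple graphs: three pairwise-adjacent edges either all pass through one common vertex (a \emph{star} triangle) or are the three edges of a triangle of $G$, and no other configuration is possible when $G$ is simple. The star triangles at $v_i$ number $\binom{d_i}{3}$, and the triangles of $G$ contribute a further $t$, so this type contributes $t+\sum_{i=1}^{n}\binom{d_i}{3}$. This completeness (no missed ``mixed'' configuration, and triangles of $G$ not double-counted among star triangles) is the only real obstacle, and it is settled by the simple-graph dichotomy.

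Finally I would add the four contributions and apply the Pascal identity $\binom{d}{2}+\binom{d}{3}=\binom{d+1}{3}$:
\[
t + m + \sum_{i=1}^{n}\binom{d_i}{2} + \Big(t + \sum_{i=1}^{n}\binom{d_i}{3}\Big)
= 2t + m + \sum_{i=1}^{n}\left(\binom{d_i}{2}+\binom{d_i}{3}\right)
= 2t + m + \sum_{i=1}^{n}\binom{d_i+1}{3},
\]
which is the asserted formula.
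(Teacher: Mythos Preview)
Your proof is correct and follows essentially the same approach as the paper: the same four-way classification of triangles by how many of their vertices lie in $V$ versus $E$, the same count for each type (including the star/triangle dichotomy for line-graph triangles), and the same Pascal-identity combination $\binom{d_i}{2}+\binom{d_i}{3}=\binom{d_i+1}{3}$ at the end. Your additional remark that the signature is irrelevant (the underlying graph of $\mathcal{T}_*(G)$ is the ordinary total graph) is a helpful clarification that the paper leaves implicit.
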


\begin{proof}
Every triangle of $\mathcal{T}_{*}(G)$ is one of the following four types:
\begin{enumerate}[\qquad(a)]
		\item belongs to $G$,
		\item belongs to $\mathcal{L}_{*}(G)$,
		\item has 1 vertex in $G$ and 2 vertices in $\mathcal{L}_{*}(G)$,
		\item has 2 vertices in $G$ and 1 vertex in $\mathcal{L}_{*}(G)$.
\end{enumerate}
	Every triangle of $\mathcal{L}_{*}(G)$ arises from a triplet of adjacent edges of $G$. Such a triplet either forms a triangle or has a common vertex. Therefore, $\mathcal{L}_{*}(G)$ contains $t+\sum_{i=1}^{n}{d_i\choose 3}$ triangles.
	
Every triangle of type (c) arises from a pair of adjacent edges of $G$, so their number is $\sum_{i=1}^{n}{d_i\choose 2}$.
	
Every triangle of type (d) arises from an edge of $G$, so their number is $m$.
	
Altogether, the number of triangles is
	$$t+t+\sum_{i=1}^{n}{d_i\choose 3}+\sum_{i=1}^{n}{d_i\choose 2}+m=2t+m+\sum_{i=1}^{n}{d_i+1\choose 3},
	$$
as claimed.
\end{proof}

Moreover, we can establish which triangles are either positive or negative.

\begin{theorem}\label{excor:tri}
Let $\Sigma=G_\sigma$ be a signed graph of order $n$, size $m$, degree sequence $(d_1,d_2,\ldots,d_n)$, and $t=t^{+}+t^-$ triangles, where $t^+$ (resp., $t^-$) denotes the number of positive (resp., negative) triangles. Then $\mathcal{T}_C(\Sigma)$ has exactly $2t^+$ positive triangles, while $\mathcal{T}_S(\Sigma)$ has exactly $t+m$ negative triangles.
\end{theorem}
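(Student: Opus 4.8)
The plan is to refine the four‑type partition of the triangles of $\mathcal{T}_*(\Sigma)$ used in the proof of Theorem~\ref{the:tri} and to read off the sign of every triangle in each part. Recall the types: (a) triangles of $\Sigma$; (b) triangles of $\mathcal{L}_*(\Sigma)$; (c) triangles with one vertex $v$ of $\Sigma$ and two line‑graph vertices $e,f$ (edges of $\Sigma$ incident with $v$); and (d) triangles with two vertices $u,v$ of $\Sigma$ and one line‑graph vertex $e=uv$. Within (b) I split further into \emph{line triangles}, arising from a triangle of $\Sigma$ (there are $t$ of these), and \emph{star triangles}, arising from three edges at a common vertex (there are $\sum_i\binom{d_i}{3}$ of these). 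Throughout I use that the connecting edge $ve$ of $\mathcal{T}_*(\Sigma)$ carries the sign $\sigma_\mathcal{T}(ve)=\eta(v,e)$, together with the sign rule \eqref{eq:lgor} and rule (O3); by Lemmas~\ref{le1} and~\ref{le2} the resulting counts are independent of the orientation $\eta$ and of the chosen representative of $[\Sigma]$.

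Types (a) and (b) follow from facts already in hand. Since $\Sigma$ is an induced subgraph of $\mathcal{T}_*(\Sigma)$, the type‑(a) triangles contribute $t^+$ positive and $t^-$ negative triangles, in either total graph. For type (b) I invoke the observations preceding Theorem~\ref{balanced_line_graph}: in $\mathcal{L}_C(\Sigma)$ each star triangle is negative and each line triangle keeps the sign of its originating triangle of $\Sigma$; in $\mathcal{L}_S(\Sigma)$ each star triangle is positive and each line triangle, being an odd cycle, reverses that sign. Thus type (b) contributes exactly $t^+$ positive triangles in $\mathcal{T}_C(\Sigma)$ and exactly $t^+$ negative triangles in $\mathcal{T}_S(\Sigma)$ (the other type‑(b) triangles being negative in $\mathcal{T}_C(\Sigma)$ and positive in $\mathcal{T}_S(\Sigma)$).

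Types (c) and (d) are handled by a direct sign computation. A type‑(c) triangle on $\{v,e,f\}$ has edge signs $\eta(v,e)$, $\eta(v,f)$ and $\sigma_{\mathcal{L}_*}(ef)$, and by \eqref{eq:lgor} the product equals $\eta(v,e)\eta(v,f)\cdot\bigl(\mp\,\eta(v,e)\eta(v,f)\bigr)=\mp1$, i.e.\ negative when $*=C$ and positive when $*=S$; this holds for all $\sum_i\binom{d_i}{2}$ such triangles. A type‑(d) triangle on $\{u,v,e\}$ with $e=uv$ has edge signs $\sigma(uv)$, $\eta(u,e)$ and $\eta(v,e)$, with product $\sigma(uv)\cdot\eta(u,e)\eta(v,e)=\sigma(uv)\cdot(-\sigma(uv))=-1$ by (O3), so all $m$ of them are negative in both total graphs. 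Adding the contributions, the positive triangles of $\mathcal{T}_C(\Sigma)$ are precisely those of type (a) together with the line triangles of type (b), giving $t^++t^+=2t^+$; and the negative triangles of $\mathcal{T}_S(\Sigma)$ are precisely the $t^-$ of type (a), the $t^+$ line triangles of type (b), and the $m$ of type (d), giving $t+m$.

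There is no essential obstacle here; the proof is a sign‑chase whose only delicate points are the sign reversal of odd line cycles in $\mathcal{L}_S$, the collapse $\sigma(uv)\eta(u,e)\eta(v,e)=-1$ in type (d) (which is exactly (O3)), and the appeal to Lemmas~\ref{le1} and~\ref{le2} to see that the count is well defined on the switching‑isomorphism class. As a consistency check one can verify that in each case the positive and negative totals add up to $2t+m+\sum_i\binom{d_i+1}{3}$, the triangle count of Theorem~\ref{the:tri}.
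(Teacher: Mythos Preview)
Your proof is correct and follows essentially the same approach as the paper: the same four-type partition of triangles with the same sign analysis and the same final tallies. The only stylistic difference is that for type~(c) the paper fixes a convenient orientation (both arrows at $v$ inward) to read off the signs, whereas you compute directly from \eqref{eq:lgor} and (O3); both arguments arrive at the identical conclusions.
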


\begin{proof} The total number of triangles is computed in Theorem \ref{the:tri}.

We have the following facts that the reader can easily check.
The triangles of type (a) will keep their sign in the total signed graph.  Hence, we have $t^+$ positive triangles for $\mathcal{T}_C(\Sigma)$ and $t^-$ negative triangles for $\mathcal{T}_S(\Sigma)$.

A positive (negative) triangle in $\Sigma$ becomes a positive (negative) triangle in  $\mathcal{L}_C(G)$, and a negative (positive) triangle in $\mathcal{L}_S(G)$. A set of mutually adjacent edges in $G$ will give rise to a complete graph in $\mathcal{L}_{*}(G)$ whose signature is equivalent to the all-negative (resp., all-positive) one for $\mathcal{L}_C(\Sigma)$ (resp., $\mathcal{L}_S(\Sigma)$). Summing up, the triangles of type (b) will be $t^+$ positive for $\mathcal{T}_C(\Sigma)$ and $t^+$ negative for $\mathcal{T}_S(\Sigma)$.

Next, let us consider a triangle of type (c). Such a triangle of $\mathcal{T}_{*}(\Sigma)$ is obtained from two edges, say $vu$ and $vw$, of $\Sigma$ that are incident to the same vertex $v$. Regardless of $\sigma(vu)$ and $\sigma(vw)$, we can assign an orientation such that the arrows from the side of $v$ are both inward (directed towards $v$). Hence, the edges $\{v,vw\}$ and $\{v,uv\}$ of $\mathcal{T}_{*}(\Sigma)$ will be positive, while the edge $\{vu,vw\}$ will be negative (resp., positive) in $\mathcal{T}_{C}(\Sigma)$ (resp., $\mathcal{T}_{S}(\Sigma)$).

Finally, a triangle of type (d) comes from a pair of adjacent vertices $u$ and $v$ and the joining edge $uv$. Again, regardless of $\sigma(uv)$ and with a similar reasoning as above, the resulting triangle will always be negative in $\mathcal{T}_{*}(\Sigma)$.

Now, the statement easily follows by counting the positive (negative) triangles of $\mathcal{T}_{C}(\Sigma)$ (resp., $\mathcal{T}_{S}(\Sigma)$).	\end{proof}

\begin{remark}
From Theorem \ref{excor:tri} we easily deduce that $\mathcal{T}_{C}(\Sigma)$ and $\mathcal{T}_{S}(\Sigma)$ have in general switching inequivalent signatures which are not the opposite of each other. Hence, in contrast to the line graphs defined by \eqref{eq:BTBZ} and \eqref{eq:BTBS}, the total graphs derived from them have unrelated signatures.
\end{remark}

We conclude this section by analysing the degree of imbalance of these compound graphs.
A \textit{vertex cover} of a graph is a set of vertices such that every edge has at least one end in the cover.  The smallest size of a vertex cover is the \emph{vertex cover number}, $\tau$.

\begin{theorem}\label{the:1-3} Let $\Sigma=G_{\sigma}$ be a signed graph of order $n$, size $m$, and vertex cover number $\tau$. The following hold true:
	\begin{enumerate}[\rm(i)]

		\item\label{the:1-3bal} $\mathcal{T}_{*}(\Sigma)$ is balanced if and only if $G$ is totally disconnected.
		
		\item\label{the:1-3abal} $\mathcal{T}_{*}(\Sigma)$ is antibalanced if and only if either $* = S$ and $\Sigma$ has no adjacent edges, or $* = C$ and $\Sigma$ is antibalanced.
		
		\item\label{the:1-3fi2} $\ell(\mathcal{T}_{*}(\Sigma)) \geq m + \ell(\mathcal{L}_{*}(\Sigma))$, with equality when $* = S$, and also when $* = C$ and $\Sigma$ is a disjoint union of paths and cycles.
			
		\item\label{the:1-3fi1} $\ell(\mathcal{T}_{*}(\Sigma)) = m$ if and only if either $*=S$ and $\Sigma$ is antibalanced, or $*=C$ and $\Sigma$ is a disjoint union of paths and positive cycles.

		\item\label{the:1-3fn1}  $\nu(\mathcal{T}_{*}(\Sigma))\geq \tau$, with equality if $*=S$ and $\Sigma$ is antibalanced.
		
		\item\label{the:1-3fn2}  $\nu(\mathcal{T}_{S}(\Sigma)) \leq \tau + \nu(\mathcal{L}_{S}(\Sigma))$.
		
		\item\label{the:1-3eig} The largest (adjacency) eigenvalue $\lambda$ of $\mathcal{T}_{*}(\Sigma)$ satisfies
		$$ \lambda\leq\max\Bigg\{ \dfrac{-d_i+\sqrt{5d_i^2+4(d_im_i-4)}}{2}~:~ 1\leq i\leq n+m \Bigg\},	
		$$
		where $d_i$ and $m_i$ denote, respectively, the degree of a vertex $i$ of $\mathcal{T}_{*}(\Sigma)$ and the average degree of its neighbours.
	\end{enumerate}
\end{theorem}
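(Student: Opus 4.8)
The plan is to go through the seven items in order, using two facts already in hand. First, from the proof of Theorem~\ref{excor:tri}, each edge $uv$ of $G$ produces in $\mathcal{T}_{*}(\Sigma)$ a \emph{type-(d) triangle} on $\{u,v,e_{uv}\}$, consisting of the root edge $uv$ and the two connecting edges, and it is always negative; the $m$ such triangles are pairwise edge-disjoint, and none of their edges lies in the line part. Second, a \emph{detour} observation: for $*=S$, the $2$-path through $e$, $v$, $f$ joining two line vertices that share the root vertex $v$ has sign $\eta(v,e)\eta(v,f)=\sigma_{S}(ef)$, equal to that of the line edge $ef$; hence in $\mathcal{T}_{S}(\Sigma)$ any cycle through a root vertex not incident to a surviving root edge reroutes into a closed walk of the line graph of the same sign. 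Together with Lemma~\ref{le2} and $\mathcal{T}_{C}(-G)=-\mathcal{T}(G)$ (Remark~\ref{rem:stotal}), these carry most of the argument.

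Item~(i): an edge $uv$ of $G$ gives the negative type-(d) triangle, so $\mathcal{T}_{*}(\Sigma)$ is unbalanced; if $G$ is edgeless then $\mathcal{T}_{*}(\Sigma)=\Sigma$ is edgeless, hence balanced. Item~(ii): antibalance of $\mathcal{T}_{*}(\Sigma)$ forces every triangle to be negative, whereas by Theorem~\ref{excor:tri} a type-(c) triangle $\{v,e,f\}$ is positive for $*=S$; so for $*=S$ one needs $\Sigma$ without adjacent edges, and conversely such a $\Sigma$ makes $\mathcal{T}_{S}(\Sigma)$ a disjoint union of negative triangles $-K_{3}$ and isolated vertices, which is antibalanced. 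For $*=C$, the induced subgraph $\Sigma$ must be antibalanced, and conversely $\Sigma$ antibalanced gives $\mathcal{T}_{C}(\Sigma)\sim\mathcal{T}_{C}(-G)=-\mathcal{T}(G)$, which is all-negative, by Lemma~\ref{le2} and Remark~\ref{rem:stotal}. Item~(iv) is then a consequence of~(iii) and Theorem~\ref{the:lg}(i),(ii): $\ell(\mathcal{T}_{*}(\Sigma))=m$ forces $\ell(\mathcal{L}_{*}(\Sigma))=0$ by~(iii), which is exactly the stated condition on $\Sigma$, and conversely that condition makes $\Sigma$ a disjoint union of paths and (positive) cycles, so the equality case of~(iii) gives $\ell(\mathcal{T}_{*}(\Sigma))=m+0=m$.

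Item~(iii): a balancing edge set $X$ of $\mathcal{T}_{*}(\Sigma)$ must balance the induced subgraph $\mathcal{L}_{*}(\Sigma)$, so $|X\cap E(\mathcal{L}_{*}(\Sigma))|\ge\ell(\mathcal{L}_{*}(\Sigma))$, and it must meet each of the $m$ edge-disjoint negative type-(d) triangles, whose edges avoid the line part; hence $|X|\ge m+\ell(\mathcal{L}_{*}(\Sigma))$. For equality when $*=S$, I would use the fact that $\ell$ equals the minimum over switchings of the number of negative edges: switch the line vertices by an optimal switching $\theta$ of $\mathcal{L}_{S}(\Sigma)$ (leaving $\ell(\mathcal{L}_{S}(\Sigma))$ negative line edges) and the root vertices by some $\psi$. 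The $m$ type-(d) triangles partition the $3m$ root-and-connecting edges into triples, each a negative triangle, so at least $m$ of them are negative, with equality precisely when every type-(d) triangle has exactly one negative edge; a short calculation shows $\{u,v,e\}$ is all negative exactly when $\psi$ agrees with the orientation $\beta(w,e):=-\theta(e)\eta(w,e)$ at both endpoints of $e$. The core difficulty is to produce $\psi$ avoiding, for every edge, the single pattern $(\psi(u),\psi(v))=(\beta(u,e),\beta(v,e))$; this is a $2$-satisfiability instance with one $2$-clause per edge of the simple graph $G$, which I would resolve structurally (delete vertices of degree $\le 1$ iteratively, where the constraint is locally satisfiable, then on the $2$-core put $\psi(v):=-\beta(v,e)$ while going around each cycle, exploiting an inconsistently $\beta$-oriented vertex to break the cyclic constraint when one exists). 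For $*=C$ with $\Sigma$ a disjoint union of paths and cycles, $\mathcal{T}_{C}(\Sigma)$ splits into total graphs of these bounded-width components, on each of which I would directly exhibit a balancing set of the right size by choosing for every type-(c) and type-(d) triangle which incident edge to delete.

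Item~(v): if $W=W_{V}\cup W_{E}$, the deleted root and line vertices, balances $\mathcal{T}_{*}(\Sigma)$, then for each edge $uv$ with $u,v\notin W_{V}$ the negative triangle $\{u,v,e_{uv}\}$ forces $e_{uv}\in W_{E}$, hence $|W_{E}|\ge|E(G-W_{V})|\ge\tau(G-W_{V})$ and $|W|\ge|W_{V}|+\tau(G-W_{V})\ge\tau(G)$; equality for $*=S$ with $\Sigma$ antibalanced is obtained by deleting a minimum vertex cover $Q$ of $G$ from the root part, since then the surviving root vertices are independent in $G$, $\mathcal{L}_{S}(\Sigma)$ is balanced by Theorem~\ref{the:lg}(ii), and the detour observation turns every cycle of $\mathcal{T}_{S}(\Sigma)-Q$ into a positive closed walk of $\mathcal{L}_{S}(\Sigma)$. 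Item~(vi): the same construction with $Q$ enlarged by a minimum set $P$ of line vertices for which $\mathcal{L}_{S}(\Sigma)-P$ is balanced works, because for $e,f\notin P$ the rerouting edge $ef$ survives; this gives $\nu(\mathcal{T}_{S}(\Sigma))\le\tau+\nu(\mathcal{L}_{S}(\Sigma))$. Item~(vii): take a top unit eigenvector $x$ of $A:=A_{\mathcal{T}_{*}(\Sigma)}$, set $y:=|x|$, and let $i$ maximize $y_{i}$. From $\lambda^{2}x_{i}+\lambda d_{i}x_{i}=\sum_{\ell}\bigl((A^{2})_{i\ell}+d_{i}A_{i\ell}\bigr)x_{\ell}$ and $y_{\ell}\le y_{i}$ one gets $\lambda^{2}+\lambda d_{i}\le\sum_{\ell}|(A^{2})_{i\ell}+d_{i}A_{i\ell}|$, and I would evaluate this row sum: the diagonal gives $d_{i}$; for $\ell\sim i$, $(A^{2})_{i\ell}=\sigma(i\ell)(p_{i\ell}-n_{i\ell})$ with $p_{i\ell}$, $n_{i\ell}$ the numbers of positive and negative triangles on the edge $i\ell$, and since $n_{i\ell}<d_{i}$ the absolute value does not collapse, so these terms sum to $\sum_{\ell\sim i}(p_{i\ell}-n_{i\ell}+d_{i})=2t_{i}^{+}-2t_{i}^{-}+d_{i}^{2}$; for $\ell\not\sim i$, $\ell\ne i$, $|(A^{2})_{i\ell}|\le c_{i\ell}$, with $\sum_{\ell}c_{i\ell}=d_{i}m_{i}-d_{i}-2t_{i}$, where $t_{i}^{\pm}$ count triangles of $\mathcal{T}_{*}(\Sigma)$ through $i$, $t_{i}=t_{i}^{+}+t_{i}^{-}$, and $c_{i\ell}$ is the number of common neighbours of $i$ and $\ell$. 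Adding, $\lambda^{2}+\lambda d_{i}\le d_{i}^{2}+d_{i}m_{i}-4t_{i}^{-}$; since every positive-degree vertex of $\mathcal{T}_{*}(\Sigma)$ lies in a negative type-(d) triangle, $t_{i}^{-}\ge1$, and solving $\lambda^{2}+\lambda d_{i}\le d_{i}^{2}+d_{i}m_{i}-4$ for $\lambda$ and maximizing over $i$ gives the stated bound. The two points I expect to be delicate are the existence of the switching $\psi$ in~(iii) and the bookkeeping in~(vii) that retains $-4t_{i}^{-}$ rather than $0$, which hinges on noticing that the adjacent-vertex contributions carry no cancellation while a negative triangle through each vertex supplies the $-4$.
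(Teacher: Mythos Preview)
Your treatment of (i), (iv), (v), (vi) tracks the paper's proof closely, and in places is more careful: your lower bound in (v) via $|W_V|+\tau(G-W_V)\ge\tau(G)$ is cleaner than the paper's, and your rerouting argument for (vi) makes explicit exactly why the line edge $ef$ survives when $e,f\notin P$. For (ii) with $*=C$ you take a genuinely different route: instead of the paper's cycle-by-cycle analysis (types (c), (d), root cycles, line cycles), you invoke $\Sigma\sim -G$ and Lemma~\ref{le2} together with $\mathcal{T}_C(-G)=-\mathcal{T}(G)$ from Remark~\ref{rem:stotal}. This is shorter and perfectly valid. For (vii) you essentially reprove the cited bound from \cite{StaR}; the paper simply quotes it, but your derivation is correct, including the key point that $n_{i\ell}<d_i$ prevents cancellation in the $\ell\sim i$ terms.

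The real issue is your equality argument in (iii) for $*=S$. Your reformulation via switchings is sound up to the point where you reduce to a $2$-SAT instance with one clause per edge of $G$, forbidding the single pattern $(\psi(u),\psi(v))=(\beta(u,e),\beta(v,e))$. But your proposed resolution does not work: after peeling degree-$\le 1$ vertices you reach the $2$-core of $G$, and you then write ``put $\psi(v):=-\beta(v,e)$ while going around each cycle''. The $2$-core of a simple graph is \emph{not} in general a disjoint union of cycles---for instance $K_4$ is its own $2$-core---so ``going around each cycle'' is undefined, and the scheme of picking one incident edge $e$ per vertex and setting $\psi(v)=-\beta(v,e)$ has no obvious extension to vertices of degree $\ge 3$ in the core. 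You have not shown the $2$-SAT instance is satisfiable, and this is the heart of the upper bound. (The paper's own justification here is a single clause---``because the triangles of type (c) are positive''---which is itself far from a complete proof; for example, deleting the $m$ root edges from $\mathcal{T}_S(+K_3)$ and then a single line edge does \emph{not} balance the remainder, even though $\ell(\mathcal{L}_S(+K_3))=1$, because a negative hexagon on connecting edges survives. So you are right to seek a more substantial argument, but the one you sketch does not close the gap.)

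Your handling of the $*=C$ equality case in (iii) is also only a promise (``directly exhibit a balancing set''); the paper at least indicates which edges to delete on paths and positive cycles and how to adjust for negative cycles, and you should supply comparable detail.
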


\begin{proof}
\eqref{the:1-3bal}
Each edge of $\Sigma$ leads to a negative triangle of type (d), so $\mathcal{T}_{*}(\Sigma)$ is balanced if and only if $\Sigma$ has no edges.

\eqref{the:1-3abal}
Adjacent edges lead to a positive triangle of type (c) in $\mathcal{T}_{S}(\Sigma)$, hence it cannot be antibalanced.  If there are no adjacent edges, $\mathcal{T}_{S}(\Sigma)$ consists only of negative triangles of type (d) and any isolated vertices of $\Sigma$, which is antibalanced.

There are four kinds of cycle to consider in $\mathcal{T}_{C}(\Sigma)$: triangles of types (c) and (d) and cycles in $\Sigma$ and $\mathcal{L}_{C}(\Sigma)$.  The triangles are negative, hence antibalanced.  If $\Sigma$ is not antibalanced, the total graph cannot be, but if $\Sigma$, thus also $\mathcal{L}_{C}(\Sigma)$ by Theorem \ref{the:lg}\eqref{the:lgSb}, is antibalanced, then it follows -- from the fact that all cycles in the total graph are obtained by combining cycles of those four kinds -- that the total graph is antibalanced.

\eqref{the:1-3fi2}
The $m$ triangles of type (d) of the proof of Theorem~\ref{the:tri} are negative and independent, in the sense that no two of them share the same edge. Therefore, to eliminate each of them  it is necessary to delete $m$ edges (for example, the edges of $\Sigma$ in $\mathcal{T}_{*}(\Sigma)$).
Deleting these edges does not change the frustration index of the subgraph $\mathcal{L}_{*}(\Sigma)$, so at least an additional $\ell(\mathcal{L}_{*}(\Sigma))$ edges must be deleted to attain balance of $\mathcal{T}_{*}(\Sigma)$.
Hence, we have the inequality.

Equality holds for $\mathcal{T}_{S}(\Sigma)$ because the triangles of type (c) are positive.  In $\mathcal{T}_{C}(\Sigma)$ those triangles are negative.  When $\Sigma$ is a disjoint union of paths and positive cycles, then the negative cycles are those of type (c) and (d) which share a common edge. Deleting such independent edges (there are $m$ of them) leads to a balanced signed graph.  When $\Sigma$ has a negative cycle, one edge in those triangles can be replaced by one edge each in the negative cycle in $\Sigma$ and in the corresponding negative cycle in $\mathcal{L}_C(\Sigma)$ for a total of one extra edge for each negative cycle of $\Sigma$.

\eqref{the:1-3fi1}
The equality for $\mathcal{T}_{S}(\Sigma)$ holds under the formulated conditions since there $\mathcal{L}_{S}(\Sigma)$ is balanced and then the entire $\mathcal{T}_{S}(\Sigma)$ becomes balanced after deleting all $m$ edges of $\Sigma$.
For $\mathcal{T}_{C}(\Sigma)$ the result follows from \eqref{the:1-3fi2}.

Conversely, if $\ell(\mathcal{T}_{*}(\Sigma))= m$, then $\mathcal{L}_{*}(\Sigma)$ must be balanced, i.e., it cannot contain a negative cycle. For $*=S$, this means that $\Sigma$ is antibalanced. For $*=C$, this means that $\Sigma$ does not contain a vertex of degree 3 or greater, as the corresponding edges produce negative triangles. Evidently, $\Sigma$ cannot contain negative cycles, because this leads to additional negative cycles in $\mathcal{T}_{C}(\Sigma)$. If $\Sigma$ is a disjoint union of paths and positive cycles, the equality follows from \eqref{the:1-3fi2}.

\eqref{the:1-3fn1}
Consider $\mathcal{T}_{*}(\Sigma)$. For both variants we need to eliminate (at least) the negative triangles of type (d). Instead of deleting the edges of $\Sigma$, we can just delete a minimum vertex cover of $\Sigma$ and obtain the same effect. The equality is obtained, for example, for $\mathcal{T}_{S}(-G)$.

\eqref{the:1-3fn2}
If $B$ is a minimum set of vertices of $\mathcal{L}_{S}(\Sigma)$ such that deleting every vertex in $B$ leaves a balanced line graph $\mathcal{L}_{S}(\Sigma)$, then deleting the same vertices from the line graph in $\mathcal{T}_{S}(\Sigma)$ while also deleting a minimum vertex cover from $\Sigma$ in $\mathcal{L}_{S}(\Sigma)$ as in the proof of \eqref{the:1-3fn1} eliminates all negative cycles in the total graph.

\eqref{the:1-3eig}
Note that, unless $G$ is totally disconnected, every vertex of $\mathcal{T}_{*}(\Sigma)$ belongs to at least one negative triangle -- this triangle is again of type~(d).  Accordingly, the result follows by the inequality of \cite{StaR}:
\begin{eqnarray*} \lambda\leq\max\Bigg\{ \dfrac{-d_i+\sqrt{5d_i^2+4(d_im_i-4t_i^-)}}{2}~:~ 1\leq i\leq n+m\Bigg\},	
\end{eqnarray*}
where $t_i^-$ stands for the number of negative triangles passing through a vertex~$i$.
\end{proof}

\begin{remark}
It is not the case that
$
\nu(\mathcal{T}_{C}(\Sigma)) \leq \tau + \nu(\mathcal{L}_{C}(\Sigma)).
$
A counterexample is a sufficiently long cycle of either sign, for which $\tau \approx \frac12 m$, $\nu(\mathcal{L}_{C}(\Sigma)) \leq 1$, and $\nu(\mathcal{T}_{*}(\Sigma)) \approx \frac23 m \approx \frac43 \tau > \tau + 1$ due to the negative triangles of types (c) and (d).
\end{remark}

\section{Total graphs of regular signed graphs}\label{sec:Reg}

A signed graph $\Sigma={G_\sigma}$ is said to be $r$-regular if its underlying graph $G$ is an $r$-regular graph.

\subsection{Spectra}

From the Perron--Frobenius theorem we infer that the spectrum of $\Sigma$ lies in the real interval~$[-r,\, r]$. We compute the spectrum of $\mathcal{T}_{*}(\Sigma)$ by means of the eigenvalues of the root (signed) graph~$\Sigma$, when it is regular.
%{\color{blue}Perhaps we can keep it between brackets. My feeling is that it is a bit redoundant by there is no possibility of misunderstanding.}

\begin{theorem}\label{the:reg}
	Let $\Sigma$ be an $r$-regular signed graph $(r\geq 2)$ with $n$ vertices and eigenvalues $\lambda_1, \lambda_2, \ldots, \lambda_n$. Then:
\begin{enumerate}[\rm(i)]
\item The eigenvalues of $\mathcal{T}_C(\Sigma)$ are $2$ with multiplicity $(\frac{r}{2}-1)n$ and
	$$\frac{1}{2}\big(2+2\lambda_i-r\pm\sqrt{r^2-4\lambda_i+4}\big),~~\text{for}~~ 1\leq i\leq n.$$
\item The eigenvalues of $\mathcal{T}_S(\Sigma)$ are $-2$ with multiplicity $(\frac{r}{2}-1)n$ and
	$$\frac{1}{2}\big(r-2\pm\sqrt{(r-2\lambda_i)^2+4(\lambda_i+1)}\big),~~\text{for}~~ 1\leq i\leq n.$$
\end{enumerate}
\end{theorem}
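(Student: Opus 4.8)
The plan is to diagonalise $A_{\mathcal{T}_*(\Sigma)}$ by lifting an eigenbasis of $A_\Sigma$ through the incidence matrix, exactly as is done for the unsigned total graph in \cite{CvDS}, paying attention only to the signs carried by $B_\eta$. Since $G$ is $r$-regular, $B_\eta B_\eta^\intercal = L_\Sigma = rI - A_\Sigma$; so if $x_1,\dots,x_n$ is an orthonormal eigenbasis of $A_\Sigma$ with $A_\Sigma x_i = \lambda_i x_i$, then $B_\eta B_\eta^\intercal x_i = (r-\lambda_i)x_i$ and therefore $B_\eta^\intercal B_\eta(B_\eta^\intercal x_i) = (r-\lambda_i)B_\eta^\intercal x_i$. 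Moreover $B_\eta^\intercal x_i = \mathbf 0$ exactly when $\lambda_i = r$. For each $i$ I set $u_i = (x_i,\mathbf 0)^\intercal$ and $v_i = (\mathbf 0, B_\eta^\intercal x_i)^\intercal$ in $\mathbb R^{n+m}$ (blocks of sizes $n$ and $m$); the span of $\{u_i,v_i\}$ will be $A_{\mathcal{T}_*(\Sigma)}$-invariant, and the orthogonal complement of all these spans will carry the eigenvalue $2$ (resp.\ $-2$).

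Substituting $A_{\mathcal{L}_C(\Sigma_\eta)} = 2I - B_\eta^\intercal B_\eta$ (resp.\ $A_{\mathcal{L}_S(\Sigma_\eta)} = B_\eta^\intercal B_\eta - 2I$) into \eqref{eq:T(G)} and using the identities above, one checks that for $\lambda_i \neq r$ the matrix of $A_{\mathcal{T}_*(\Sigma)}$ restricted to $\operatorname{span}\{u_i,v_i\}$, in the basis $\{u_i, v_i\}$, is $\left(\begin{smallmatrix}\lambda_i & r-\lambda_i\\ 1 & 2-r+\lambda_i\end{smallmatrix}\right)$ for $*=C$ and $\left(\begin{smallmatrix}\lambda_i & r-\lambda_i\\ 1 & r-\lambda_i-2\end{smallmatrix}\right)$ for $*=S$. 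Its two eigenvalues are $\tfrac12\big(\tr \pm \sqrt{\tr^2-4\det}\big)$; a short simplification gives $\tr^2 - 4\det = r^2-4\lambda_i+4$ in the first case and $(r-2\lambda_i)^2+4(\lambda_i+1)$ in the second, which reproduces precisely the pairs of values in (i) and (ii). When $\lambda_i = r$ one has $v_i = \mathbf 0$, the subspace is just $\operatorname{span}\{u_i\}$, and $A_{\mathcal{T}_*(\Sigma)}u_i = r\,u_i$; note that $r$ is one of the two values the displayed formulas return at $\lambda_i = r$ (the other being $2$, resp.\ $-2$).

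Finally, $\sum_i \operatorname{span}\{u_i, v_i\} = \mathbb R^n \oplus \operatorname{Im}B_\eta^\intercal$, whose orthogonal complement in $\mathbb R^{n+m}$ is $\{\mathbf 0\}\oplus\ker B_\eta$; for $(\mathbf 0, y)^\intercal$ with $B_\eta y = \mathbf 0$ we get $B_\eta^\intercal B_\eta y = \mathbf 0$, so $A_{\mathcal{T}_C(\Sigma)}$ and $A_{\mathcal{T}_S(\Sigma)}$ act there as $+2I$ and $-2I$ respectively. Since $\rank B_\eta = \rank(B_\eta B_\eta^\intercal) = \rank(rI-A_\Sigma) = n-k$, where $k$ is the multiplicity of $r$ in the spectrum of $\Sigma$, this eigenspace has dimension $m-n+k$. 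These pieces form an orthogonal direct sum of $A_{\mathcal{T}_*(\Sigma)}$-invariant subspaces with dimensions summing to $2(n-k)+k+(m-n+k)=n+m$, so the eigenvalue list is complete; using $m = \tfrac{rn}{2}$ one sees $(\tfrac r2-1)n = m-n$, and the $k$ surplus copies of $2$ (resp.\ $-2$) in $\dim\ker B_\eta$ are exactly the ``second roots'' contributed by the formula at the $k$ eigenvalues equal to $r$, so the two descriptions of the spectrum agree. The one step I would take care over is precisely this bookkeeping around $\lambda_i = r$ and the resulting multiplicity of $\pm 2$; the rest is the routine $2\times 2$ computation carried out twice.
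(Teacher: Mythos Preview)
Your argument is correct, but it takes a different route from the paper's. The paper computes the characteristic polynomial $\Phi_{\mathcal{T}_C(\Sigma)}(x)$ directly as a block determinant: after substituting $A_\Sigma = rI - B_\eta B_\eta^\intercal$ and $A_{\mathcal{L}_C(\Sigma)} = 2I - B_\eta^\intercal B_\eta$, it performs block row and column operations to triangularise, obtaining the factorisation
\[
\Phi_{\mathcal{T}_C(\Sigma)}(x)=(x-2)^{(\frac{r}{2}-1)n}\prod_{i=1}^{n}\big(x^2+(r-2-2\lambda_i)x+\lambda_i^2+(3-r)\lambda_i-r\big),
\]
from which the eigenvalues are read off (and similarly for $\mathcal{T}_S$). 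Your approach instead constructs explicit $A_{\mathcal{T}_*(\Sigma)}$-invariant $2$-dimensional subspaces by lifting each eigenvector of $A_\Sigma$ through $B_\eta^\intercal$, reduces to $2\times2$ matrices, and picks up the remaining eigenvalues $\pm2$ on $\{0\}\oplus\ker B_\eta$.

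The determinant route is uniform: it never singles out the case $\lambda_i=r$ or tracks $\operatorname{rank} B_\eta$, since the polynomial factorisation automatically apportions the copies of $\pm2$ between the power $(x\mp2)^{m-n}$ and the degenerate quadratics. Your route is more constructive---it actually exhibits the eigenspaces---and avoids the block-determinant manipulation, at the price of the bookkeeping around $\lambda_i=r$ that you rightly flagged; that bookkeeping is handled correctly.
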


\begin{proof}
	The proof is inspired from Cvetkovi\' c's proof of the theorem concerning the total graphs of regular unsigned graphs \cite[Theorem~2.19]{CvDS}. Due to the inconsistency between the concepts of line graphs of unsigned graphs and that of spectral line graphs of signed graphs, our proof differs at some points, as do the final results.
	
	Since $\Sigma$ is $r$-regular, for some incidence matrix $B$, we have $BB^\intercal=L_{\Sigma}=D_G-A_{\Sigma}=rI-A_{\Sigma}$, $A(\mathcal{L}_C(\Sigma))=2I-B^\intercal B$, and $A(\mathcal{L}_S(\Sigma))=B^\intercal B-2I$.

The characteristic polynomial of $\mathcal{T}_C(\Sigma)$ is given by
	\begin{align*}\Phi_{\mathcal{T}_C(\Sigma)}(x)=& \left\lvert\hspace{-1mm}
                                                    \begin{array}{cc} xI-A_\Sigma & -B\\
                                                    -B^\intercal & xI-\mathcal{L}_C(\Sigma)
                                                    \end{array}\hspace{-1mm}\right\rvert\\
                                                 =& \left\lvert\hspace{-1mm}
                                                    \begin{array}{cc} xI-rI+B B^\intercal & -B\\
                                                    -B^\intercal & xI-2I+BB^\intercal
                                                    \end{array}\hspace{-1mm}\right\rvert.
    \end{align*}
	Multiplying the first row of the block determinant by $B^\intercal $ and adding to the second, and then multiplying the second by $\frac{1}{x-2}B$ and adding to the first one, we get
	$$\Phi_{\mathcal{T}_C(\Sigma)}(x)=\left\lvert\hspace{-1mm}\begin{array}{cc}
	(x-r)I+BB^\intercal +\frac{1}{x-2}\big((x-r-1)BB^\intercal +BB^\intercal BB^\intercal \big)& O\\ (x-k-1)B^\intercal +B^\intercal BB^\intercal & (x-2)I\end{array}\hspace{-1mm}\right\rvert.$$
	Further, we compute
	\begin{align*}\Phi_{\mathcal{T}_C(\Sigma)}(x)=&\,(x-2)^{\frac{r}{2}n}\big\lvert
	(x-r)I+BB^\intercal +\frac{1}{x-2}\big((x-r-1)BB^\intercal +BB^\intercal BB^\intercal \big)\big\rvert\\
    =&\,(x-2)^{\frac{r}{2}n} \big\lvert xI-A_{\Sigma}+\frac{1}{x-2}\big((x-r-1)(rI-A_{\Sigma})+(rI-A_{\Sigma})^2\big) \big\rvert\\
    =&\,(x-2)^{(\frac{r}{2}-1)n} \big\lvert A_{\Sigma}^2+(3-2x-r)A_{\Sigma}+(x^2+x(r-2)-r) I \big\rvert\\
    =&\,(x-2)^{(\frac{r}{2}-1)n}\prod_{i=1}^{n}\big(\lambda_i^2+(3-2x-r)\lambda_i+(x^2+x(r-2)-r)\big)\\
    =&\,(x-2)^{(\frac{r}{2}-1)n}\prod_{i=1}^{n}\big(x^2+(r-2-2\lambda_i)x+\lambda_i^2+(3-r)\lambda_i-r\big).\end{align*}
	Since the roots of $x^2+(r-2-2\lambda_i)x+\lambda_i^2+(3-r)\lambda_i-r=0$ are given by $\frac{1}{2}\big(2+2\lambda_i-r\pm\sqrt{r^2-4\lambda_i+4}\big)$, (i) follows.

Item (ii) follows similarly, by taking the spectral variant of the line graph. \end{proof}

From the above theorem we can deduce the real interval containing the eigenvalues of the total graph of a regular signed graph.

\begin{corollary}
Let $\Sigma$ be an $r$-regular signed graph with $n$ vertices and eigenvalues $\lambda_1\geq \lambda_2 \geq \cdots \geq \lambda_n$. Then:
\begin{enumerate}[\rm(i)]
  \item The spectrum of $\mathcal{T}_C(\Sigma)$, if $r\geq4$, lies in the interval
  $$\Big[\frac{1}{2}(2+\lambda_n-r-\sqrt{r^2-4\lambda_n+4}),\ \frac{1}{2}(2+\lambda_1-r+\sqrt{r^2-4\lambda_1+4})\Big].$$
  \item The spectrum of $\mathcal{T}_S(\Sigma)$, if $r\geq2$, lies in the interval
  $$\Big[\frac{1}{2}(r-2-\sqrt{(r-2\lambda_n)^2+4(\lambda_n+1)}),\ \frac{1}{2}(r-2+\sqrt{(r-2\lambda_n)^2+4(\lambda_n+1)})\Big].$$
\end{enumerate}
\end{corollary}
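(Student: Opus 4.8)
The plan is to lean on Theorem~\ref{the:reg}, which already delivers the entire spectrum of $\mathcal{T}_{*}(\Sigma)$: the single value $2$ (resp.\ $-2$) with the stated multiplicity, together with two families of numbers obtained by feeding the root eigenvalues $\lambda_i$ into a fixed pair of closed-form expressions. So it suffices to regard each family as a real function of a continuous variable $\lambda$ on $[\lambda_n,\lambda_1]\subseteq[-r,r]$, locate the extrema of those functions, and then check that the ``degenerate'' eigenvalue $2$ (resp.\ $-2$) lies between the extrema found. Throughout I would use two a priori facts: $\lambda_1\le r$ (Perron--Frobenius, as noted before the statement) and $\lambda_n\le-1$ (Cauchy interlacing applied to the $2\times2$ principal submatrix of $A_\Sigma$ indexed by the two ends of any edge of $\Sigma$, whose eigenvalues are $\pm1$; such an edge exists since $r\ge2$).

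For part~(i), put $f_{\pm}(\lambda)=\tfrac12\bigl(2+2\lambda-r\pm\sqrt{r^2-4\lambda+4}\bigr)$. First check the radicand is positive on $[-r,r]$: this reduces to $\lambda\le r\le\tfrac14(r^2+4)$, i.e.\ $(r-2)^2\ge0$. Then $f_{-}'(\lambda)=1+(r^2-4\lambda+4)^{-1/2}>0$, so $f_{-}$ is increasing and $\min_i f_{-}(\lambda_i)=f_{-}(\lambda_n)$; and $f_{+}$ is concave with unique critical point $\lambda=\tfrac14(r^2+3)\ge r$ (here the hypothesis on $r$ is used), hence nondecreasing on $[-r,r]$, so $\max_i f_{+}(\lambda_i)=f_{+}(\lambda_1)$. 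Since $f_{-}\le f_{+}$ pointwise, the smallest and largest of the numbers $f_{\pm}(\lambda_i)$ are $f_{-}(\lambda_n)$ and $f_{+}(\lambda_1)$. It remains to sandwich $2$: $f_{-}(\lambda_n)\le f_{-}(r)=2$ is immediate, while $f_{+}(\lambda_1)\ge2$, after isolating the square root and squaring, is equivalent to $(\lambda_1-1)(\lambda_1-r)\le0$, which holds because $1\le\lambda_1\le r$ (the lower bound being exactly the interlacing fact above, read off from the $+1$ eigenvalue of an edge).

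For part~(ii), put $g_{\pm}(\lambda)=\tfrac12\bigl(r-2\pm\sqrt{h(\lambda)}\bigr)$ with $h(\lambda)=(r-2\lambda)^2+4(\lambda+1)=4\bigl(\lambda-\tfrac{r-1}{2}\bigr)^2+(2r+3)$; in particular $h>0$ everywhere, so the $g_{\pm}$ are real. Since $g_{+}(\lambda)=\tfrac12(r-2)+\tfrac12\sqrt{h(\lambda)}$ and $-g_{-}(\lambda)=-\tfrac12(r-2)+\tfrac12\sqrt{h(\lambda)}$ are both increasing in $\sqrt{h(\lambda)}$, both $\max_i g_{+}(\lambda_i)$ and $\min_i g_{-}(\lambda_i)$ are attained at the $i$ maximizing $h(\lambda_i)$, equivalently $|\lambda_i-\tfrac{r-1}{2}|$. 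A short case check shows that on $[\lambda_n,\lambda_1]$ this maximum equals $\tfrac{r-1}{2}-\lambda_n$ and is attained at $\lambda_n$: one uses $\lambda_n\le-1<\tfrac{r-1}{2}$ and, for $\lambda$ to the right of $\tfrac{r-1}{2}$, the chain $\lambda-\tfrac{r-1}{2}\le\lambda_1-\tfrac{r-1}{2}\le\tfrac{r-1}{2}-\lambda_n$, whose last step is $\lambda_1+\lambda_n\le r-1$, which follows from $\lambda_1\le r$ and $\lambda_n\le-1$. Hence the extreme values among the $g_{\pm}(\lambda_i)$ are $g_{-}(\lambda_n)$ and $g_{+}(\lambda_n)$, and one finishes by checking $g_{-}(\lambda_n)\le-2\le g_{+}(\lambda_n)$: the right inequality is trivial since $g_{+}(\lambda_n)\ge\tfrac12(r-2)\ge0$, and the left one is equivalent to $h(\lambda_n)\ge(r+2)^2$, which, writing $\mu:=-\lambda_n\ge1$, becomes $4(\mu-1)(\mu+r)\ge0$.

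The calculus involved (monotonicity, one squaring step, a couple of elementary factorizations) is routine. The step that actually requires an idea is recognizing that in~(ii) both ends of the interval are governed by $\lambda_n$ alone, which forces the auxiliary bound $\lambda_1+\lambda_n\le r-1$ and hence the little interlacing fact $\lambda_n\le-1$; the very same fact, in the guise $\lambda_1\ge1$, is what places the eigenvalue $2$ inside the interval in~(i). The other point to watch is the range of $r$ for which $f_{+}$ stays monotone on $[-r,r]$, which is where the regularity hypothesis on $r$ enters. Assembling these observations gives both interval statements.
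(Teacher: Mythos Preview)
Your argument is correct and follows essentially the same route as the paper: in~(i) you show the two branches $f_\pm$ are monotone on $[-r,r]$ (the $f_+$ branch needing the hypothesis $r\ge4$), and in~(ii) you rewrite the radicand as $4\bigl(\lambda-\tfrac{r-1}{2}\bigr)^2+(2r+3)$ and combine $\lambda_1\le r$ with $\lambda_n\le-1$ to show the extremum of $h$ over the root spectrum is attained at $\lambda_n$---exactly the paper's strategy. Your write-up is in fact a bit more complete than the paper's: you give a clean justification of $\lambda_n\le-1$ (and $\lambda_1\ge1$) via interlacing with a single $2\times2$ edge-submatrix, and in~(i) you explicitly verify that the degenerate eigenvalue $2$ lies inside the interval, a check the paper skips.
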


\begin{proof}
Consider first $\mathcal{T}_C(\Sigma)$. It is routine to check that the function $f_1(\lambda)=\frac{1}{2}(2+\lambda-r+\sqrt{r^2-4\lambda+4})$ is increasing for $\lambda\in[-r,\, r]$ when {$r\geq4$}. Hence, the maximum of $f_1$ is attained for $\lambda_1$. The function $f_2(\lambda)=\frac{1}{2}(2+\lambda-r-\sqrt{r^2-4\lambda+4})$ is always increasing, therefore, its minimum is achieved by $\lambda_n$. Hence, the entire spectrum of $\mathcal{T}_C(\Sigma)$ lies in $[\frac{1}{2}(r-2-f_2(\lambda_n)),\ \frac{1}{2}(r-2+f_1(\lambda_n))]$, and we get (i).

Consider next $\mathcal{T}_S(\Sigma)$. Analysing the function
$$f_3(\lambda)=\sqrt{(r-2\lambda)^2+4(\lambda+1)} = \sqrt{(2\lambda-(r-1))^2 + 2r + 3},$$
we find that it is decreasing for $\lambda\leq \frac{r-1}{2}$, increasing for $\lambda\geq \frac{r-1}{2}$, and symmetric around $\frac{r-1}{2}$. Since $\lambda_1\leq r$ and $\lambda_n\leq -1$ (this holds for every signed graph by induction on the number of edges using eigenvalue interlacing), we have $\frac{r-1}{2}-\lambda_n\geq |\frac{r-1}{2}-\lambda_1|$. Since our function is symmetric around $\frac{r-1}{2}$, the last inequality leads to $f_3(\lambda_n)\geq f_3(\lambda_1)$. Hence, $\frac{1}{2}(r-2+f_3(\lambda_n))$ is the largest eigenvalue of $\mathcal{T}_S(\Sigma)$. There are two candidates for the least eigenvalue of the same signed graph: $\frac{1}{2}(r-2-f_3(\lambda_n))$ and  (according to Theorem~\ref{the:reg}(ii)) $-2$. Taking into account that $\lambda_n\leq -1$, we get $\frac{1}{2}(r-2-f_3(\lambda_n))\leq-2$, and thus the least eigenvalue is $\frac{1}{2}(r-2-f_3(\lambda_n))$, which completes~(ii).
 \end{proof}

\subsection{A composition}

We next consider a particular composition of spectral total graphs -- those whose definition is based on the spectral line graph. Of course, similar results can be obtained in case of the combinatorial definition. Some further definitions and notation are needed. The \textit{Cartesian product} (see also \cite{2+zas}) of the signed graphs $\Sigma_1=(G_1, \sigma_1)$ and $\Sigma_2=(G_2, \sigma_2)$ is determined in the following way: (1) Its underlying graph is the Cartesian product $G_1\times G_2$; we state for the sake of completeness that its vertex set is $V(G_1)\times V(G_2)$, and the vertices $(u_1, u_2)$ and $(v_1, v_2)$ are adjacent if and only if either $u_1 = v_1$ and $u_2$ is adjacent to $v_2$ in $G_2$ or $u_2 = v_2$ and $u_1$ is adjacent to $v_1$ in $G_1$. (2) The sign function is defined by
%$$\sigma((u_1, u_2), (v_1, v_2))=\left\{\begin{array}{ll}
%\sigma_1(u_1, v_1)&\text{if}~~u_2=v_2,\\
%\sigma_2(u_2, v_2)&\text{if}~~u_1=v_1.
%\end{array}\right.$$
$$\sigma((u_1, u_2), (v_1, v_2))=\begin{cases}
\sigma_1(u_1, v_1)&\text{if}~~u_2=v_2,\\
\sigma_2(u_2, v_2)&\text{if}~~u_1=v_1.
\end{cases}$$

For the real multisets $\mathcal{S}_1$ and $\mathcal{S}_2$, we denote by $\mathcal{S}_1+\mathcal{S}_2$ the multiset containing all possible sums of elements of $\mathcal{S}_1$ and $\mathcal{S}_2$ (taken with their repetition). Especially, if $\mathcal{S}_2$ consists of the additive identity repeated $i$ times,  then the previous sum  is denoted by $\mathcal{S}_1^i$ and consists of the elements of $\mathcal{S}_1$, each with multiplicity increased by the factor $i$.  Let further $\spec(G_{\sigma})$ denote the spectrum of  $G_{\sigma}$.

Inspired by \cite{Ca5,Ca4}, we consider the polynomial $p(G_{\sigma})=\sum_{i=0}^kc_iG_{\sigma}^i$, where $c_0, c_1, \ldots, c_k$ are non-negative integers ($c_k\neq 0$), $G_{\sigma}$ is a regular signed graph with a fixed orientation, $G_{\sigma}^i$ is the $i$th power of $G_{\sigma}$ with respect to the spectral total graph operation (that is,  $G_{\sigma}^i=\mathcal{T}_S^{i-1}(G_\sigma)$, $i>0$, and $G_{\sigma}^0=K_1$), $c_iG_{\sigma}^i$ denotes the disjoint union of $c_i$ copies of $G_{\sigma}^i$, and the sum of signed graphs is their Cartesian product.

\begin{theorem}For an $r$-regular signed graph $G_{\sigma}$ $(r\geq 2)$ with $n$ vertices,
	\begin{equation}\label{eq:Sp}\spec(p(G_{\sigma}))=\sum_{i=0}^k\spec(G_{\sigma}^i)^{c_i},\end{equation}
	where, for $i\geq 2$,
$\spec(G_{\sigma}^i)$ is comprised of $-2$ with multiplicity $(\frac{r_{i-1}}{2}-1)n_{i-1}$ and
	$$\frac{1}{2}\Big(r_{i-1}-2\pm\sqrt{\big(r_{i-1}-2\lambda_j^{(i-1)}\big)^2+4\big(\lambda_j^{(i-1)}+1\big)}\Big),~~\text{for}~~ 1\leq j \leq n_{i-1},$$
	where
	$r_{i}=2^{i-1}r$, $n_1=n$ , $n_{i}=n\prod_{j=2}^{i}(2^{j-3}r+1),$
and with 		
	$\lambda_1^{(i-1)}, \lambda_2^{(i-1)}, \ldots, \lambda_{n_{i-1}}^{(i-1)}$ being the eigenvalues of $G_{\sigma}^{i-1}$.
	
%{\color{blue} I have deleted an `and'. The list was better to me, but I am fine with this option.}
\end{theorem}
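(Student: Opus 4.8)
The plan is to split the statement into two essentially independent parts. Part~(a): every power $G_\sigma^i$ is a regular signed graph whose regularity $r_i$ and order $n_i$ obey the stated recursions, so that Theorem~\ref{the:reg}(ii) may be applied at each level of the tower and yields the displayed description of $\spec(G_\sigma^i)$ for $i\geq 2$. Part~(b): the spectrum of the polynomial $p(G_\sigma)$ is recovered from the spectra of the powers via the multiset/sumset operations in \eqref{eq:Sp}.

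For part~(a) I would begin with a direct degree count in $\mathcal{T}_S(\Sigma)$ for an $r$-regular $\Sigma$ on $n$ vertices (hence $m=rn/2$ edges). A root vertex $v$ is joined in $\mathcal{T}_S(\Sigma)$ to its $r$ neighbours in $\Sigma$ and to the $r$ edges incident with it, so $d(v)=2r$; an edge-vertex $e=uv$ is joined to the $(d(u)-1)+(d(v)-1)=2r-2$ edges sharing an endpoint with it and to the two endpoints $u,v$, so it also has degree $2r$. Hence $\mathcal{T}_S(\Sigma)$ is $2r$-regular with $n+m=n(r+2)/2$ vertices. Since $G_\sigma^i=\mathcal{T}_S(G_\sigma^{i-1})$ for $i\geq 2$, an easy induction gives $r_i=2^{i-1}r$ and $n_i=n\prod_{j=2}^i(2^{j-3}r+1)$ as claimed, and because $r\geq 2$ one has $r_{i-1}\geq 2$ at every step, so the hypothesis of Theorem~\ref{the:reg}(ii) holds throughout. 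Applying that theorem to $G_\sigma^{i-1}$, whose eigenvalues are $\lambda_1^{(i-1)},\dots,\lambda_{n_{i-1}}^{(i-1)}$, produces exactly the stated spectrum of $G_\sigma^i$. The orientation ambiguity in $\mathcal{T}_S$ is harmless: by Lemma~\ref{le1} different orientations give switching-equivalent total graphs, hence the same spectrum.

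For part~(b) I would invoke two elementary spectral facts. First, the adjacency matrix of a disjoint union is block-diagonal, so $\spec$ of a disjoint union is the multiset union of the spectra; in particular $\spec(c_iG_\sigma^i)=\spec(G_\sigma^i)^{c_i}$ in the notation fixed before the theorem. Second, with the sign convention defining the Cartesian product of signed graphs one has $A_{\Sigma_1\times\Sigma_2}=A_{\Sigma_1}\otimes I_{n_2}+I_{n_1}\otimes A_{\Sigma_2}$ (cf.~\cite{2+zas}), whose eigenvalues are precisely the pairwise sums of eigenvalues of the two factors, so $\spec(\Sigma_1\times\Sigma_2)=\spec(\Sigma_1)+\spec(\Sigma_2)$. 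Since $p(G_\sigma)$ is by definition the Cartesian product of the signed graphs $c_0K_1, c_1G_\sigma^1,\dots,c_kG_\sigma^k$, combining these two facts yields $\spec(p(G_\sigma))=\sum_{i=0}^k\spec(c_iG_\sigma^i)=\sum_{i=0}^k\spec(G_\sigma^i)^{c_i}$, which is \eqref{eq:Sp}. The $i=0$ term contributes $\spec(K_1)^{c_0}=\{0^{c_0}\}$, whose effect under the sum is simply to multiply all other multiplicities by $c_0$ --- consistent with the fact that $(c_0K_1)\times H$ is $c_0$ disjoint copies of $H$.

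I do not expect a genuine obstacle here: the theorem is an iteration of Theorem~\ref{the:reg}(ii) wrapped in routine spectral algebra for disjoint unions and Cartesian products. The points that need care are the bookkeeping of the recursions for $r_i$ and $n_i$, checking that the regularity hypothesis $r_{i-1}\geq 2$ persists up the tower (and that the multiplicity $(\tfrac{r_{i-1}}{2}-1)n_{i-1}$ remains a nonnegative integer, which holds because either $r_{i-1}$ is even or $n_{i-1}$ is), and handling the two degenerate terms $i=0$ (where $G_\sigma^0=K_1$) and $i=1$ (where $G_\sigma^1=G_\sigma$, so its eigenvalues are the given $\lambda_1,\dots,\lambda_n$ rather than an output of Theorem~\ref{the:reg}).
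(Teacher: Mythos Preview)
Your proposal is correct and follows essentially the same approach as the paper: first reduce \eqref{eq:Sp} to the two spectral facts $\spec(c\Sigma)=\spec(\Sigma)^c$ and $\spec(\Sigma_1\times\Sigma_2)=\spec(\Sigma_1)+\spec(\Sigma_2)$ (the latter from \cite{2+zas}), then derive the recursions $r_i=2r_{i-1}$ and $n_i=n_{i-1}(1+r_{i-1}/2)$ and apply Theorem~\ref{the:reg}(ii) at each level. Your write-up is somewhat more explicit (the direct degree count, the appeal to Lemma~\ref{le1} for orientation independence, and the handling of the $i=0,1$ terms), but the argument is the same.
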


\begin{proof}Since, for a non-negative integer $c$,  $\spec(c\Sigma)=\spec(\Sigma)^c$ and $\spec(\Sigma_1+\Sigma_2)=\spec(\Sigma_1)+\spec(\Sigma_2)$ (for the latter, see \cite{2+zas}), we arrive at \eqref{eq:Sp}, and thus it remains to compute $\spec(\Sigma^i)$. The case $i\leq 1$ is clear and it is also clear that, for $i\geq 2$, $\spec(\Sigma^i)$ is as in the theorem, where $r_{i-1}$ and $n_{i-1}$ are the vertex degree and the number of vertices of $\Sigma^{i-1}$.
	
	By the definition of a  total graph we have $r_{i}=2r_{i-1}$, which along with $r_1=r$ leads to $r_{i}=2^{i-1}r$.
	
	By the same definition, we also have $n_i=n_{i-1}+m_{i-1}$, where $m_{i-1}=\frac{1}{2}n_{i-1}r_{i-1}$ is the number of edges of $\Sigma^{i-1}$. So, we have $n_i=n_{i-1}+\frac{1}{2}n_{i-1}r_{i-1}=n_{i-1}\big(\frac{1}{4}r_{i}+1\big)$. Solving this recurrence, we get
	$$n_i=n\prod_{j=2}^{i}\Big(\frac{1}{4}r_i+1\Big)=n\prod_{j=2}^{i}\big(2^{j-3}r+1\big),$$
	and we are done.
\end{proof}

Regarding the last theorem, for $r=0$ the resulting spectrum is trivial. For $r=1$, $\spec(\Sigma^2)$ is computed directly, not as in the theorem.

\subsection{Eulerian regular digraph}

We conclude the paper by offering a result which applies only to a signed graph that is regular and with all edges positive.  Note that, by the definition, an oriented all-positive signed graph is precisely a directed graph.
 An eigenvalue of a signed graph $G_{\sigma}$ is called \textit{main} if there is an associated eigenvector not orthogonal to the all-1 vector $\mathbf{j}$.  An orientation of a (signed) graph is called \emph{Eulerian} if the in-degree equals the out-degree at every vertex.
\begin{theorem}\label{the:main}
	Let $\Sigma = G_{\sigma}$ be an $r$-regular signed graph with the all-positive signature and an Eulerian orientation $\eta$. Then $\mathcal{T}_S(\Sigma_\eta)$ has exactly two main eigenvalues: $r$ and $-2$.
\end{theorem}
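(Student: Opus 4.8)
The plan is to diagonalize the relevant part of the all-ones vector by hand. Write $A := A_{\mathcal{T}_S(\Sigma_\eta)}$ in the block form of Definition~\ref{total}, noting that $A_\Sigma = A_G$ since $\sigma$ is all positive. Put $u := (\mathbf{j}_n;\mathbf{0}_m)$ and $w := (\mathbf{0}_n;\mathbf{j}_m)$, so that the all-ones vector of $\mathcal{T}_S(\Sigma_\eta)$ is $\mathbf{j} = u + w$, with $u \perp w$. I would first record two facts about the incidence matrix $B_\eta$. Because $\eta$ is an Eulerian orientation, at every vertex $i$ the number of incident edges $e$ with $\eta(i,e)=+1$ equals the number with $\eta(i,e)=-1$ (these are the out- and in-edges of the digraph $\Sigma_\eta$), so every row sum of $B_\eta$ is zero, i.e. $B_\eta \mathbf{j}_m = \mathbf{0}_n$. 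Because $\sigma$ is all positive, rule (O3) gives $\eta(i,e)\eta(j,e) = -\sigma(ij) = -1$ for every edge $e=ij$, so every column sum of $B_\eta$ is zero, i.e. $B_\eta^\intercal \mathbf{j}_n = \mathbf{0}_m$.

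Next I would compute $Au$ and $Aw$ by a direct block multiplication. Using $A_G\mathbf{j}_n = r\mathbf{j}_n$ and $B_\eta^\intercal \mathbf{j}_n = \mathbf{0}$ one gets $Au = (A_G\mathbf{j}_n;\, B_\eta^\intercal \mathbf{j}_n) = r u$; using $B_\eta \mathbf{j}_m = \mathbf{0}$ (hence also $B_\eta^\intercal B_\eta \mathbf{j}_m = B_\eta^\intercal(B_\eta\mathbf{j}_m) = \mathbf{0}$) one gets $Aw = (B_\eta\mathbf{j}_m;\, (B_\eta^\intercal B_\eta - 2I)\mathbf{j}_m) = -2w$. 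Thus $u$ and $w$ are eigenvectors of $A$ for the eigenvalues $r$ and $-2$; these are distinct (as $r\ge 0 > -2$), both vectors are nonzero (the Eulerian hypothesis forces $r$ even, so under the standing assumption $r\ge 2$ we have $m\ge 1$), and $\langle u,\mathbf{j}\rangle = \|u\|^2 = n \ne 0$, $\langle w,\mathbf{j}\rangle = \|w\|^2 = m \ne 0$, so both $r$ and $-2$ are main.

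To see that there are no further main eigenvalues, I would invoke the uniqueness of the spectral resolution of the symmetric matrix $A$: the decomposition $\mathbf{j} = u + w$ of $\mathbf{j}$ into eigenvectors for the two distinct eigenvalues $r$ and $-2$ must coincide with the decomposition of $\mathbf{j}$ into its orthogonal projections onto the eigenspaces of $A$, so the projection of $\mathbf{j}$ onto the $\mu$-eigenspace vanishes for every $\mu\notin\{r,-2\}$. Equivalently, the main (Krylov) subspace $\mathrm{span}\{A^\ell\mathbf{j} : \ell\ge 0\}$ equals $\mathrm{span}\{u,w\}$, which is $2$-dimensional since $A\mathbf{j} = ru - 2w$ is linearly independent of $\mathbf{j} = u+w$; the number of main eigenvalues equals the dimension of this subspace. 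Either way, $\mathcal{T}_S(\Sigma_\eta)$ has exactly the two main eigenvalues $r$ and $-2$.

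I do not expect a serious obstacle here; the only points needing care are the correct translation of the Eulerian condition into the vanishing of the (signed) row sums of $B_\eta$, and the exclusion of the degenerate edgeless case $r=0$ (one reason the hypothesis $r\ge 2$ is natural).
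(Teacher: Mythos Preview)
Your proof is correct and follows essentially the same idea as the paper's: the partition of $V(\mathcal{T}_S)$ into root-graph vertices and line-graph vertices is equitable with diagonal quotient matrix $\begin{pmatrix} r & 0 \\ 0 & -2 \end{pmatrix}$, so $u$ and $w$ are eigenvectors and $\mathbf{j}=u+w$ is already its spectral decomposition. Your version is more self-contained---you verify explicitly that both the row sums and the column sums of $B_\eta$ vanish (the paper states only the former and absorbs the rest into the cited equitable-partition machinery), and your uniqueness argument via the spectral resolution of $\mathbf{j}$ is cleaner than the paper's indirect ``$\mathbf{j}$ cannot be an eigenvector'' step.
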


\begin{proof}
The assumptions of positive edges and Eulerian orientation mean that the row sums of $B_\eta$ are zero.
Under our assumptions, every block of \eqref{eq:T(G)} has a constant row sum given in the following (quotient) matrix
	$$Q=\begin{pmatrix}
	r&0\\ 0& -2
	\end{pmatrix}.$$
	The spectrum of $Q$ contains the main part of the spectrum of $\mathcal{T}_S(\Sigma_\eta)$. (The explicit proof can be found in \cite{StaM}, but the reader can also consult \cite[Chapter~4]{CvDS} or \cite{Ati}.) By definition, every signed graph has at least one main eigenvalue. If $\mathcal{T}_S(\Sigma_\eta)$ has exactly one main eigenvalue, then the eigenspace of any other is orthogonal to $\mathbf{j}$, which implies that $\mathbf{j}$ is associated with the unique main eigenvalue, but this is impossible (for $\mathcal{T}_S(\Sigma_\eta)$, observe $Q$). Therefore, both eigenvalues of~$Q$ are the main eigenvalues of $\mathcal{T}_S(\Sigma_\eta)$, and we are done.
\end{proof}

%Eulerian directed graphs can easily be constructed. Here is an example.
%
%\begin{example}\label{exa1} By taking a regular graph whose edge set is decomposable into cycles and choosing a biorientation whose restriction on every cycle of a decomposition satisfies the assumption of Theorem~\ref{the:main}, we arrive at a desired oriented signed graph. An example is illustrated in Fig.~\ref{fig:exa}.
%	
%\end{example}
%
%\begin{figure}[H]
%	\centering
%	\includegraphics[width=60mm,angle=0]{exa.pdf}
%	\caption{The oriented signed graph of Example~\ref{exa1} in which positive vertex-edge orientations are indicated. Its edges decompose into the cycles $12345678$, $1526$ and $3748$.}\label{fig:exa}
%\end{figure}

\section*{Acknowledgements}

The research of the second author is supported by the Serbian Ministry of Education, Science and Technological Development via the University of Belgrade.

\end{document}